\documentclass{article}

\usepackage{arxiv}
\usepackage{tikz}

\usepackage[utf8]{inputenc} 
\usepackage[T1]{fontenc}    
\usepackage{hyperref}       
\usepackage{url}            
\usepackage{booktabs}       
\usepackage{amsfonts}       
\usepackage{nicefrac}       
\usepackage{microtype}      
\usepackage{graphicx}
\usepackage{amsmath}
\usepackage{amsthm}
\newtheorem{thm}{Theorem}[section]
\newtheorem{lem}[thm]{Lemma}
\newtheorem{prop}[thm]{Proposition}
\newtheorem{cor}[thm]{Corollary}
\newtheorem{exmp}{Example}[section]

\title{Fiber-preserving and orientation-reversing involutions of Seifert fibered 3-manifolds}

\author{
  Benjamin Peet\\
  Department of Mathematics\\
  St. Martin's University\\
  Lacey, WA 98503 \\
  \texttt{bpeet@stmartin.edu} \\
}

\begin{document}
\maketitle

\begin{abstract}
We consider fiber-preserving, orientation-reversing involutions on orientable Seifert fibered 3-manifolds and the conditions on a manifold for admissibility of such involutions. We construct a class $\Psi$ of fiber-preserving, orientation-reversing involutions that act trivially on the base. Each element of $\Psi$ is obtained by extending a product involution across Seifert pieces of type $V(2,2;-1)$ - a solid torus with three fibers filled according to seifert invariants $(2,1)$, $(2,1)$, and $(1,-1)$. We show that $\Psi$ forms a single conjugacy class under fiber-preserving diffeomorphisms. Our main result establishes that any fiber-preserving, orientation-reversing involution factors as $\psi\circ g$, where $g$ is fiber-preserving and orientation-preserving and $\psi\in\Psi$, thus reducing the problem to the previously known orientation-preserving case. Through the orientable base-space double covering, we further extend the classification to manifolds with non-orientable base orbifold.
\end{abstract}

\keywords{geometry; topology; $3$-manifolds; involutions; Seifert fiberings; orientation-reversing}
\textbf{2010 MSC Classification:} 57S25, 55R05

\section{Introduction}

In three previous papers \cite{peet2019finite}, \cite{peet2022finite}, and \cite{peet2019finitenon}, we examined the actions of finite, fiber-preserving, and orientation-preserving group actions on orientable Seifert manifolds. In particular, the following result was established:

\begin{thm}
Let $M$ be a closed, compact, and orientable Seifert $3$-manifold that fibers over an orientable base space. Let $\varphi:G\rightarrow Diff_{+}^{fp}(M)$ be a finite group action on $M$ such that the obstruction class can be expressed as $$b=\sum_{i=1}^{m}(b_{i}\cdot\#Orb_{\varphi}(\alpha_{i}))$$ for a collection of fibers $\{\alpha_{1},\ldots,\alpha_{m}\}$ and integers $\{b_{1},\ldots,b_{m}\}$. Then $\varphi$ is an extended product action.
\end{thm}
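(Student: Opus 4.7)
The plan is to use the decomposition of the obstruction class $b$ to find a $G$-invariant splitting of $M$ on which $\varphi$ restricts to a genuine product action, and then to extend this product structure back across the orbits of the selected fibers. First, for each $i$ choose a $G$-invariant regular neighborhood $N_i$ of $\mathrm{Orb}_\varphi(\alpha_i)$, made up of pairwise disjoint fibered solid tori permuted transitively by $G$ and disjoint from every truly exceptional fiber of $M$. Let $M_0 = M\setminus\bigcup_i \mathrm{int}(N_i)$. Because $\varphi$ is fiber-preserving and orientation-preserving, the induced action on the base orbifold $F$ is well-defined and the restriction $\varphi|_{M_0}$ is an action on a Seifert fibered manifold with boundary over the punctured base $F_0$.

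The key step is to interpret the hypothesis $b=\sum_{i=1}^m b_i\cdot\#\mathrm{Orb}_\varphi(\alpha_i)$ as saying that after formally assigning Seifert invariants $(1,b_i)$ to the fibers in $\mathrm{Orb}_\varphi(\alpha_i)$, the residual obstruction of $M_0$ as a Seifert fibration with boundary vanishes. Hence $M_0$ is a principal $S^1$-bundle over $F_0$ with trivial Euler class. Because the obstruction is already distributed $G$-equivariantly (each orbit contributes uniformly), the vanishing lifts to the equivariant setting, and one obtains a $G$-equivariant trivialization $M_0\cong F_0\times S^1$. Relative to this trivialization the restriction $\varphi|_{M_0}$ decomposes as a product $\sigma\times\rho$, where $\sigma$ is the induced action on $F_0$ and $\rho$ is a rotation action of $G$ on $S^1$ (the orientation-preserving hypothesis ensures $\rho$ is by rotations rather than reflections).

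It remains to extend the product action across each removed solid torus. For a representative $N(\alpha_i)$, the stabilizer $G_{\alpha_i}$ acts on its boundary torus through $\sigma\times\rho$, and this boundary data together with the assigned Seifert invariants $(1,b_i)$ determines a unique fiber-preserving extension to the interior, because the mapping class group of a fibered solid torus is controlled by its boundary action together with the Seifert invariant. Translating this extension equivariantly across the $G$-orbit of $\alpha_i$ and gluing yields an action on $M$ that agrees with $\varphi$ on $M_0$ and, on each $N_i$, matches the extended product template of the previous papers.

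The main obstacle is the equivariant trivialization step: upgrading the vanishing numerical obstruction to a genuinely $G$-equivariant product structure on $M_0$ requires identifying the algebraic decomposition of $b$ with a splitting of an equivariant Euler class in $H^2_G(F_0;\mathbb{Z})$, and then verifying that this class vanishes compatibly with the orientation data on both base and fiber. Once this is in hand, the extension across the $N_i$ is essentially mechanical, being governed by the classification of $G_{\alpha_i}$-actions on a fibered solid torus with prescribed boundary behavior.
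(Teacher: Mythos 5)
This theorem is not proved in the present paper at all: it is quoted as Theorem 1.1 from the author's earlier work (\cite{peet2019finite}, \cite{peet2022finite}), so your attempt can only be judged on its own merits, and as it stands it has two genuine gaps. The central one is the step you yourself flag as ``the main obstacle'': upgrading the vanishing of the numerical obstruction on the complement to a $G$-\emph{equivariant} trivialization. That step \emph{is} the theorem. Ordinary triviality of the circle bundle over the punctured base gives some product structure $M_0\cong S^1\times F_0$, but nothing you write produces a product structure that the action leaves invariant; appealing to an equivariant Euler class in $H^2_G(F_0;\mathbb{Z})$ is only a reformulation of what must be shown, not an argument, and equivariant triviality does not follow from ``each orbit contributes uniformly.'' In the cited papers this is exactly where the real work happens (constructing an invariant fibering product structure, i.e.\ an invariant horizontal foliation, on the drilled manifold, using the hypothesis that $b$ distributes over orbits to place the obstruction equivariantly in the filled-in solid tori). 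Without that construction, the claim that $\varphi|_{M_0}$ ``decomposes as $\sigma\times\rho$'' is unsupported.

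There is also a concrete error in your setup: you take the $N_i$ to be disjoint from every exceptional fiber and then assert that $M_0=M\setminus\bigcup_i\mathrm{int}(N_i)$ is a principal $S^1$-bundle over $F_0$ with trivial Euler class. If $M$ has exceptional fibers $(q_j,p_j)$ with $q_j>1$, they remain inside $M_0$, so $M_0$ is not a circle bundle at all. The decomposition underlying an extended product action drills out invariant fibered neighborhoods of \emph{all} critical fibers as well as the orbits of the $\alpha_i$, obtains $\hat{M}\cong S^1\times F$ there, and then extends the product action across all of those Dehn fillings; the extension over the exceptional solid tori needs the same boundary-matrix compatibility analysis you invoke for the $N_i$ (the orientation-preserving analogue of Propositions 5.1 and 5.2 here). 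Fixing the decomposition is routine, but the equivariant-trivialization gap is not, so the proposal does not yet constitute a proof.
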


In essence, this result established that given the condition on the obstruction term, finite actions could be expressed as the extension of a product action across Dehn fillings for some product structure $S^1 \times F$. 

This paper considers orientation-reversing involutions of orientable Seifert manifolds. 

The first part of the paper looks at the admissible Seifert manifolds, summarizing some of the previous work in \cite{peet2019finite} and \cite{peet2019finitenon}. In particular, establishing that the conditions on such a manifold to permit such an involution are that the invariants must be of the form $M=(g,o_1|(2,1),\ldots,(2,1),(1,-\frac{n}{2}))$ where there are $n$ critical fibers of order 2 with $n$ even.

We then look at the work of \cite{dugger2019involutions} to consider the possible involutions of surfaces of genus $g$. In particular, we establish conjugacy classes of such involutions.

We then analyze the form of an orientation-reversing involution, offering a method to construct a class of involutions (given an admissible manifold) that are the identity on the base space by extending a product involution across multiple copies of $V(2,2;-1)$. Here $V(2,2;-1)$ is a  trivially fibered tori with fixed product structure on the boundary and then three regular fibers drilled out and filled according to the Seifert invariants $(2,1)$, $(2,1)$, and $(1,-1)$. We denote $\Psi$ for elements of the class and show that it is a conjugacy class.

The main result reduces the problem to Theorem 1.1 (in the case of involutions) and states that any fiber-preserving and orientation-reversing involution is a fiber-preserving and orientation-preserving involution composed with an element of $\Psi$:

\newtheorem*{thm:mainresult}{Theorem \ref{thm:mainresult}}
\begin{thm:mainresult}
    Let $M$ be a closed, compact, and orientable Seifert $3$-manifold that fibers over an orientable base space and that is not $S^1 \times S$ for some surface $S$. Then, all fiber-preserving and orientation-reversing involutions can be expressed as $\psi\circ g$ for $g$ a fiber-preserving and orientation-preserving involution and $\psi\in\Psi$.

\end{thm:mainresult}

We then use the result of \cite{peet2019finitenon} that:

\newtheorem*{thm:nonorientable}{Theorem \ref{thm:nonorientable}}
\begin{thm:nonorientable}
 Let $M$ be an orientable Seifert fibered manifold that fibers over an orbifold $B$ that has non-orientable underlying space and $p:\tilde{M}\rightarrow M$ be the orientable base space double cover. Let the covering translation be $\tau:\tilde{M}\rightarrow\tilde{M}$. Then there exists an isomorphism between $Diff_{+}^{fp}(M)$ and $Cent_{+}^{fop}(\tau)$.
\end{thm:nonorientable}

In particular, this says that given an involution of $f$ on $M$ we can lift to an involution $\tilde{f}$ on $\tilde{M}$ so that $p\circ\tilde{f}=f\circ p$.

We extend this to see an isomorphism between $Diff_{-}^{fp}(M)$ and $Cent_{-}^{for}(\tau)$. This means that given an orientation-reversing involution on $M$ we can lift to an orientation-reversing involution on $\tilde{M}$, the form of which is now given.

We finish with examples of the involutions on a given admissible manifold.

\section{Preliminary definitions}

We first give some preliminary definitions.
Let $M$ be an oriented smooth manifold of dimension $3$ without boundary. We let $Diff(M)$ be the group of self-diffeomorphisms of $M$ and use the notation $Diff_{-}(M)$ for the group of orientation-reversing self-diffeomorphisms of $M$. An involution is an element of order 2 of $Diff(M)$.

$M$ will be assumed to be an orientable Seifert-fibered manifold. We use the original Seifert definition. That is, a Seifert manifold is a 3-manifold such that $M$ can be decomposed into disjoint fibers where each fiber is a simple closed curve. Then for each fiber $\gamma$, there exists a fibered neighborhood (that is, a subset consisting of fibers and containing $\gamma$) which can be mapped under a fiber-preserving map onto a solid fibered torus. \cite{Seifert1933}

We call a Seifert bundle a Seifert manifold $M$ along with a continuous map $\pi:M\rightarrow B$ where $\pi$ identifies each fiber to a point. 

 We use the notation $Diff^{fp}(M)$ for the group of fiber-preserving self-diffeomorphisms of $M$ (given some Seifert fibration). Given a fiber-preserving diffeomorphism, there is an induced diffeomorphism on the underlying space $B_{U}$ of the base space $B$. Given  $f\in Diff^{fp}_{-}(M)$, it follows from the Seifert definition that either the orientation of the fibers is reversed or the induced map on the underlying space is orientation-reversing.

If we have a manifold $M$, then a product structure on $M$ is a diffeomorphism $k:A\times B\rightarrow M$ for some manifolds $A$ and $B$. \cite{lee2003smooth} If a Seifert-fibered manifold $M$ has a product structure $k:S^{1}\times F\rightarrow M$ for some surface $F$ and $k(S^{1}\times\{x\})$ are the fibers of $M$ for each $x\in F$, then we say that $k:S^{1}\times F\rightarrow M$ is a fibering product structure of $M$. 

We note here that a fibering product structure on $M$ is equivalent to the existence of a foliation of $M$ by both circles and by surfaces diffeomorphic to $F$ so that any circle intersects each foliated surface exactly once.

Given that the first homology group (equivalently the first fundamental group) of a torus is $\mathbb{Z}\times\mathbb{Z}$ generated by two elements represented by any two nontrivial loops that cross at a single point, we can use the meridian-longitude framing from a product structure as representatives of two generators. If we have a diffeomorphism $f:T_{1}\rightarrow T_{2}$ and product structures $k_{i}:S^{1}\times S^{1}\rightarrow T_{i}$, then we can express the induced map on the first homology groups by a matrix that uses bases for $H_{1}(T_{i})$ derived from the meridian-longitude framings that arise from $k_{i}:S^{1}\times S^{1}\rightarrow T_{i}$.

We say that a $f \in Diff(A\times B)$ is a product diffeomorphism if $f:A\times B\rightarrow A\times B$ can be expressed as $(f_1,f_2$ where $f_1:A\rightarrow A$ and $f_2:B\rightarrow B$. 

Then, given $f\in Diff(M)$ and a product structure $k:A\times B\rightarrow M$, we say that $f$ leaves the product structure  invariant if $k^{-1}\circ f \circ k$ defines a product diffeomorphism on  $A\times B$.

Suppose that now we have a fibering product structure $k:S^{1}\times F\rightarrow M$. We then say that each boundary torus is positively oriented if the fibers are given an arbitrary orientation, and then each boundary component of $k(\{u\}\times F)$ is oriented by taking the normal vector to the surface according to the orientation of the fibers.

We now consider under what circumstances a fiber-preserving, but orientation-reversing involution is admitted.

\section{Admissible Seifert manifolds}

\subsection{Previous results}
We begin with some preliminary results. All of these were published in \cite{peet2022finite}

\begin{lem}Let $F$ be an orientable surface with boundary. Let the boundary be positively oriented according to some orientation of $F$ and $f:F\rightarrow F$ be a diffeomorphism. Then $f$ is orientation-preserving on $F$ if and only if $f$ is orientation-preserving between some pair of boundary components.
\end{lem}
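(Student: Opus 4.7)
The plan is to reduce the lemma to the standard fact that an orientation on $F$ canonically induces an orientation on each boundary component (via the outward-normal-first convention), and that a diffeomorphism of $F$ preserves the ambient orientation if and only if it preserves those induced boundary orientations.

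First I would pin down the orientation convention to match the preceding paragraph of the paper: fix an orientation on $F$, and give each boundary component $C\subseteq\partial F$ the orientation for which a tangent vector $v\in T_pC$ is positive exactly when $(\nu,v)$ is a positively oriented basis of $T_pF$, where $\nu$ is the outward-pointing normal at $p$. This is the same convention used for the positively oriented boundary components of $k(\{u\}\times F)$ described just above the lemma.

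For the forward direction, I would observe that any diffeomorphism $f:F\to F$ sends $\partial F$ to $\partial F$ and takes outward normals at $p\in\partial F$ to outward normals at $f(p)$ (since $f$ maps a collar neighborhood of $\partial F$ diffeomorphically to a collar neighborhood of $\partial F$). Assuming $f$ preserves the orientation of $F$, the image of a positive basis $(\nu,v)$ is a positive basis $(df(\nu),df(v))$, and because $df(\nu)$ is again an outward normal, $df(v)$ must be positively oriented on the image boundary component. Hence $f|_{C_1}:C_1\to f(C_1)$ is orientation-preserving for every boundary component $C_1$, which is a strictly stronger conclusion than the one claimed.

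The converse follows by contrapositive: if $f$ reverses the orientation of $F$, the same outward-normal argument forces $f$ to reverse the induced orientation on every boundary component, so no pair $(C_1,f(C_1))$ can be orientation-preserving. The main obstacle is purely bookkeeping: aligning the "positive orientation" appearing in the hypothesis with the outward-normal convention and confirming that self-diffeomorphisms of $F$ carry outward normals to outward normals; once this setup is in place, both implications follow at once.
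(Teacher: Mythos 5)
Your argument is correct and is the expected one: the boundary orientation induced by an orientation of $F$ is natural under diffeomorphisms (outward-pointing vectors are carried to outward-pointing vectors), so an orientation-preserving $f$ is orientation-preserving between \emph{every} boundary component and its image while an orientation-reversing $f$ is orientation-preserving between none, which yields both implications at once; the paper itself states this lemma without proof, citing \cite{peet2022finite}, and your route is the standard argument behind it. The only points to tighten are bookkeeping ones you already flag: $df(\nu)$ is merely outward-pointing rather than literally an outward normal (only its transverse component matters for the orientation comparison), and the dichotomy ``$f$ preserves or reverses the orientation of $F$'' used in your contrapositive relies on $F$ being connected, which is the implicit setting here.
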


\begin{cor} Let $\hat{M}$ be an oriented trivially Seifert fibered 3-manifold with positively oriented boundary $\partial\hat{M}=T_{1}\cup\ldots\cup T_{n}$. Then a fiber-preserving diffeomorphism $f:\hat{M}\rightarrow\hat{M}$ is orientation-preserving if and only if $f$ is orientation-preserving between some pair of boundary tori.
\end{cor}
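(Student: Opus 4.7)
The plan is to apply the preceding lemma to the induced base map, and then track how the orientations of $\hat{M}$ and of each boundary torus each decompose as products of a fiber orientation and a base-surface orientation. Choose a fibering product structure $k\colon S^{1}\times F\to \hat{M}$ realizing the trivial Seifert fibration, and let $\bar f\colon F\to F$ be the diffeomorphism of the orientable surface $F$ that $f$ induces on the base; write $\partial F=C_{1}\sqcup\dots\sqcup C_{n}$ so that $T_{i}=k(S^{1}\times C_{i})$, and let $\sigma$ be the permutation with $f(T_{i})=T_{\sigma(i)}$.

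I would then introduce three signs: $\epsilon_{\mathrm{fib}}\in\{\pm 1\}$ for the action of $f$ on the consistently oriented fibers, $\epsilon_{\mathrm{base}}\in\{\pm 1\}$ for the effect of $\bar f$ on the orientation of $F$, and $\epsilon_{i}\in\{\pm 1\}$ for the effect of $\bar f$ between $C_{i}$ and $\bar f(C_{i})$. The product structure identifies the orientation of $\hat{M}$ with (fiber orientation) times (base orientation), so $f$ is orientation-preserving on $\hat{M}$ exactly when $\epsilon_{\mathrm{fib}}\cdot\epsilon_{\mathrm{base}}=+1$. By the paper's definition of positive boundary orientation on each $T_{i}$ via the outward normal of $k(\{u\}\times F)$ together with the fiber direction, the analogous decomposition holds on each torus, so $f$ is orientation-preserving from $T_{i}$ to $T_{\sigma(i)}$ exactly when $\epsilon_{\mathrm{fib}}\cdot\epsilon_{i}=+1$. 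Finally, applying the preceding lemma to $\bar f$ gives $\epsilon_{\mathrm{base}}=+1$ iff some $\epsilon_{i}=+1$; in fact the outward-normal convention that determines the boundary orientation of $F$ from the orientation of $F$ forces the stronger equality $\epsilon_{i}=\epsilon_{\mathrm{base}}$ for every $i$, and hence $\epsilon_{\mathrm{fib}}\cdot\epsilon_{i}=\epsilon_{\mathrm{fib}}\cdot\epsilon_{\mathrm{base}}$ for every $i$, which immediately yields the corollary.

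The main obstacle, modest though it is, is the sign bookkeeping in the case $\epsilon_{\mathrm{fib}}=-1$: reversing the fiber direction and reversing the boundary-circle direction are each individually orientation-reversing on the 2-torus, yet their combination is orientation-preserving. I would verify this explicitly by taking the meridian--longitude framing arising from $k$ and noting that the induced matrix on $H_{1}(T_{i})$ has determinant equal to $\epsilon_{\mathrm{fib}}\cdot\epsilon_{i}$, regardless of any off-diagonal shear entries that may arise from twisting in the gluing. The analogous computation on the top homology of $\hat{M}$ gives $\epsilon_{\mathrm{fib}}\cdot\epsilon_{\mathrm{base}}$, pinning down the sign conventions used above and closing the argument.
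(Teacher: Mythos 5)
Your argument is correct: the block-triangular sign bookkeeping ($\epsilon_{\mathrm{fib}}$, $\epsilon_{\mathrm{base}}$, $\epsilon_{i}$), the observation that naturality of the boundary orientation forces $\epsilon_{i}=\epsilon_{\mathrm{base}}$ for every $i$, and the preceding lemma applied to the induced base map give exactly the stated equivalence (and in fact show ``some pair'' is equivalent to ``every pair''). The paper states this corollary without proof, citing earlier work, and your route --- reducing to the lemma on the base surface of the trivial fibration while tracking the fiber-orientation sign --- is precisely the intended one.
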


\begin{prop} All finite, fiber-preserving actions on an orientable Seifert 3-manifold fibering over an orientable base space with at least one critical fiber of order greater than two are orientation-preserving.
\end{prop}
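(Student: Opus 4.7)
The plan is a contradiction argument: suppose some $g \in G$ is orientation-reversing. Since $g$ is fiber-preserving, it must reverse exactly one of the fiber orientation or the base-orbifold orientation. Fix a critical fiber $\gamma$ of order $\alpha > 2$; the rigidity of $\mathbb{Z}_\alpha$-equivariance at $\gamma$ will do the main work.

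First I analyze the setwise stabilizer $G_\gamma$. Any $h \in G_\gamma$ restricts to a fiber-preserving self-diffeomorphism of the fibered solid torus $V(\alpha,\beta)$, which lifts to the $\alpha$-fold cyclic cover $D^2 \times S^1$ whose deck transformation is $\sigma(z,t) = (\zeta z, t + \beta/\alpha)$ with $\zeta = e^{2\pi i/\alpha}$. Writing the lift as $\tilde h(z,t) = (\phi(z), \epsilon t + c(z))$ with $\phi(0) = 0$, the commutation of $\tilde h$ with $\sigma$ up to an automorphism $\sigma \mapsto \sigma^a$ produces the equivariance $\phi(\zeta z) = \zeta^a \phi(z)$ together with the cocycle relation $c(\zeta z) - c(z) \equiv (a - \epsilon)\beta/\alpha \pmod 1$. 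Differentiating the equivariance at the origin and invoking the rigidity of $\mathbb{Z}_\alpha$-actions on $\mathbb{R}^2$ for $\alpha > 2$ forces $a \equiv +1 \pmod \alpha$ if $\phi$ preserves orientation and $a \equiv -1 \pmod \alpha$ otherwise. Evaluating the cocycle at $z = 0$ gives $\alpha \mid (a - \epsilon)\beta$; since $\gcd(\alpha,\beta) = 1$, $a, \epsilon \in \{\pm 1\}$, and $\alpha > 2$, this forces $a = \epsilon$. Therefore $(\det D\phi(0))\cdot \epsilon > 0$, i.e., $h$ is orientation-preserving, and hence $G_\gamma \subseteq G_+$, where $G_+$ denotes the orientation-preserving subgroup of $G$.

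The analogous computation applied to a fiber-preserving map $g : V(\alpha,\beta_1) \to V(\alpha,\beta_2)$ between two critical fibered neighborhoods in the same $G$-orbit yields $\alpha \mid (a\beta_2 - \epsilon\beta_1)$, so an orientation-reversing $g$ forces the Seifert-invariant compatibility $\beta_2 \equiv -\beta_1 \pmod \alpha$. Iterating around the $\langle g\rangle$-orbit on critical fibers of order $\alpha$ and closing up via $g^d \in G_\gamma \subseteq G_+$ (where $d$ is the orbit length) forces $d$ to be even. The main obstacle is converting these local and orbit-level constraints into a global contradiction; the plan is to invoke Corollary 3.2 on the trivially-fibered complement $\hat M = M \setminus \bigcup_i V(\alpha_i,\beta_i)^{\circ}$. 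Since $g$ is orientation-reversing on $M$, so is $g|_{\hat M}$, and Corollary 3.2 then requires $g$ to be orientation-reversing between every pair of boundary tori. Tracking how the accumulated Seifert-invariant sign constraints from the $\alpha > 2$ critical pieces interact with this global boundary condition, and using that both $M$ and its base orbifold are orientable, yields the desired incompatibility and completes the contradiction.
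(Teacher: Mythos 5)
The paper never proves this proposition itself---it is quoted as a previous result and attributed to \cite{peet2022finite}---so there is no in-paper argument to compare your route against; judged on its own terms, your proposal has a genuine gap at exactly the step you yourself flag as ``the main obstacle.'' The local part is fine: your stabilizer computation at a critical fiber of order $\alpha>2$ is the standard observation that a fiber-preserving diffeomorphism carries a fiber of local invariant $(\alpha,\beta)$ to one of invariant $(\alpha,\beta)$ or $(\alpha,-\beta)$ according to whether it preserves or reverses orientation, and $\beta\not\equiv-\beta\pmod{\alpha}$ when $\alpha>2$ (modulo justifying the normal form $\tilde h(z,t)=(\phi(z),\epsilon t+c(z))$ for the lift, which only holds after a fiberwise adjustment). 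But the decisive case is the one you leave open: an orientation-reversing element whose orbits on the order-$\alpha$ fibers are free of even length, pairing $(\alpha,\beta)$ with $(\alpha,-\beta)$. There you assert that Corollary 3.2 together with ``tracking the accumulated sign constraints'' yields an incompatibility, but no contradiction is actually derived, and none can follow formally from the constraints you have assembled, because those constraints are mutually consistent.

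Concretely, $S^2\times S^1$ carries the Seifert fibration $(0,o_1\mid(3,1),(3,2),(1,-1))$, obtained as the quotient of the product fibration of $S^2\times S^1$ by the free diagonal $\mathbb{Z}_3$-action (rotation by $2\pi/3$ on each factor). Reflection of $S^2$ in the equator times the identity on the $S^1$ factor commutes with that action, hence descends to a fiber-preserving, orientation-reversing involution of the quotient; it swaps the two order-$3$ fibers (so $\beta_2\equiv-\beta_1$, orbit length $2$) and is orientation-reversing on every boundary torus of the complement of the critical neighborhoods---i.e., it satisfies every condition in your outline while contradicting the conclusion you are trying to reach. So the local $\beta\mapsto-\beta$ bookkeeping, the even-orbit condition, and Corollary 3.2 cannot by themselves force a contradiction; any correct proof must bring in additional input (whatever hypotheses or obstruction-term bookkeeping the cited source actually uses---note that this paper's main theorem separately excludes manifolds of the form $S^1\times S$, which is precisely where the configuration above lives, whereas the proposition as quoted does not). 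As written, your final step is a statement of intent rather than an argument, and the intended argument cannot close as described.
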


\begin{prop} All finite, fiber-preserving actions on an orientable Seifert 3-manifold fibering over an orientable base space with nonzero Euler class are orientation-preserving.
\end{prop}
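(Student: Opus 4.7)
The plan is to argue by contradiction. Suppose $\varphi:G\to Diff^{fp}(M)$ contains an orientation-reversing element $f$; the goal is to conclude that the Euler class must vanish. By Proposition 3.3 I may assume every critical fiber of $M$ has order at most two, since otherwise no such $f$ can exist at all.

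The first step is a simple dichotomy already flagged in Section 2. Because $M$ is orientable with orientable base, its orientation is determined by the orientation of the generic fiber together with that of the base, so a fiber-preserving diffeomorphism is orientation-reversing exactly when it reverses precisely one of those two orientations. Applied to $f$, this says that either $f$ reverses the fiber orientation while inducing an orientation-preserving map on the underlying space $B_U$, or $f$ induces an orientation-reversing map on $B_U$ while preserving the fiber orientation.

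The second step is to track how $f$ acts on the Euler class. The claim is that in either case of the dichotomy the Euler class is sent to its negative: reversing the fiber orientation flips the sign of each local $S^1$-bundle trivialization cocycle, and hence of the integer obstruction term together with each $b_i/a_i$ contribution; reversing the base orientation flips the fundamental class of $B$ against which the Euler class is paired, with the same net effect. On the other hand, $f$ is a self-diffeomorphism of $M$ carrying the given Seifert fibration to itself, so it must preserve the Euler class, which is an invariant of the fibration. The resulting equality $e(M)=-e(M)$ forces $e(M)=0$, contradicting the hypothesis.

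The main obstacle will be making the sign-tracking of the second step rigorous at the level of the Seifert invariants. Concretely, I would drill out each critical fiber to obtain a trivially fibered piece $\hat{M}$ with positively oriented boundary tori and apply Corollary 3.2: an orientation-reversing restriction $f|_{\hat{M}}$ must be orientation-reversing between every pair of matched boundary tori. Expressing each refilling as a solid-torus gluing in the meridian-longitude framing of Section 2, orientation reversal preserves each $(2,1)$ invariant (since $(2,1)\sim(2,-1)$) and so imposes no constraint at the critical fibers, but sends an obstruction piece $(1,b)$ to $(1,-b)$, forcing $b=0$. Once this calculation is confirmed the proposition follows immediately.
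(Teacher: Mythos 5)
The paper does not actually prove this proposition; it is quoted verbatim from \cite{peet2022finite}, so there is no in-paper argument to compare line by line. Your high-level strategy is the standard one and is sound: use the dichotomy from Section 2 (an orientation-reversing fiber-preserving map either reverses the fiber orientation or induces an orientation-reversing map on $B_U$, not both), observe that in either case the Euler number of the fibration is negated, while a self-diffeomorphism carrying the given fibration to itself must preserve it, so $e=-e$ and $e=0$, contradicting the hypothesis. That is exactly the kind of argument the cited results support, and the reduction of the proposition to the identity $e(-M)=-e(M)$ is the right key idea.

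However, the concrete sign-tracking in your final paragraph is wrong, and as written it would prove too much. Reversing orientation sends unnormalized invariants $(a_i,b_i)$ to $(a_i,-b_i)$ and $(1,b)$ to $(1,-b)$; renormalizing $(2,-1)$ back to $(2,1)$ is \emph{not} free of consequence, since $(a,-b)\sim(a,a-b)$ only at the cost of shifting the obstruction term by $-1$ for each such fiber. So the critical fibers do impose a constraint, and the correct conclusion of the bookkeeping is $e=-\bigl(b+\sum_{i}\tfrac{b_i}{a_i}\bigr)=0$, i.e.\ $b=-\tfrac{n}{2}$ when there are $n$ fibers of type $(2,1)$ --- not $b=0$. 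Your claim ``$(2,1)\sim(2,-1)$ imposes no constraint, hence $b=0$'' would rule out every admissible manifold with critical fibers, contradicting the paper's own list in Section 3.3 and its Section 8 example $M=(0,o_1|(2,1),(2,1),(2,1),(2,1),(1,-2))$, where $b=-2$. Replace that step with the correct normalization (or simply stay at the level of the rational Euler number, which is what your second paragraph already does) and the proof goes through; the proposition itself only needs $e=0$.
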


\subsection{Fixed point free involutions}

We now now consider the admissibility of such involutions that are fixed point free.

If such an involution $f$ existed for a Seifert manifold $\tilde{M}$, then the quotient space $M=\tilde{M}/f$ would be a nonorientable Seifert manifold. Then necessarily the obstruction term of $\tilde{M}$ must be zero, which is only the case when $\tilde{M} \cong S^1 \times S$ for a closed surface $S$. See \cite{scott1983geometries} for more details.

It then follows that there are no fixed point free involutions for Seifert manifolds not homeomorphic to $S^1 \times S$.

\subsection{Summary of admissible Seifert manifolds}
To summarize, these results establish that if there is an orientation-reversing, fiber-preserving involution on a Seifert manifold $M=(g,o_{1}|(q_{1},p_{1}),\ldots(q_{n},p_{n}),(1,b))$ then necessarily:

\begin{enumerate}
\item $e=-(b+\sum\limits_{i=1}^{n}\frac{p_{i}}{q_{i}})=0$

\item $q_{i}=2$ for all $i=1,\ldots,n$

\item $n$ is even

\item $b=-\frac{n}{2}$ In particular $b=0$ if there are no critical fibers and so $M=S^{1}\times S$ for some surface.

\end{enumerate}

Given \cite{scott1983geometries} we note that the only possible geometries are those of $S^{2}\times \mathbb{R}$, $E^{3}$, and $H^{2}\times \mathbb{R}$ depending upon the Euler characteristic of the base space orbifold $B$ which we begin by considering as orientable only.

Now using the Riemann-Hurwitz formula, we note that given each $q_{i}=2$:

$$\chi_{orb}(B)=\chi(B_{U})-\sum_{i=1}^{n}(1-\frac{1}{2})$$

This simplifies to

$$\chi_{orb}(B)=\chi(B_{U})-\frac{n}{2}$$

Hence, we consider the following three cases:

\begin{enumerate}
    \item $\chi(B_{U})>\frac{n}{2}$ (Geometry of $S^{2}\times \mathbb{R}$)
    \item $\chi(B_{U})=\frac{n}{2}$ (Geometry of $E^{3}$)
    \item $\chi(B_{U})<\frac{n}{2}$ (Geometry of $H^{2}\times \mathbb{R}$)
\end{enumerate}

These cases reduce to the following:

\begin{enumerate}
    \item $B_{U}=S^{2}$ and $n=0$ or $n=2$ ($S^{2}\times \mathbb{R}$)
    \item $B_{U}=S^{2}$ and $n=4$ or $B_{U}=T^{2}$ and $n=0$ ($E^{3}$)
    \item $B_{U}$ is hyperbolic and $n$ is any even integer or $B_{U}=T^{2}$ and $n$ is any even integer greater than 0 or $B_{U}=S^{2}$ and $n$ is any even integer greater than 2 ($H^{2}\times \mathbb{R}$)
\end{enumerate}

Given that $b=-\frac{n}{2}$, we can use the Seifert notation to list these cases as:

\begin{enumerate}
    \item 
    \begin{enumerate}
        \item $M=(0,o_{1}|)$
        \item $M=(1,o_{1}|(2,1),(2,1),(1,-1))$
    \end{enumerate}
    
    Both are $S^{2}\times S^{1}$ but are either fibered trivially or fibered with two fibers of order 2.
    \item 
    \begin{enumerate}
        \item $M=(0,o_{1}|(2,1),(2,1),(2,1),(2,1),(1,-2))$
        \item $M=(1,o_{1}|)$
    \end{enumerate}
    The second is simply $T^{3}$ fibered trivially

    \item 
    \begin{enumerate}
        \item $M=(g,o_{1}|(2,1),\dots,(2,1),(1,-\frac{n}{2})$
and $n$ is any even non-negative integer, and there are $n$ critical fibers. (Note that this includes $(g,o_{1}|)$)
        \item $M=(1,o_{1}|(2,1),\dots,(2,1),(1,-\frac{n}{2})$ and $n$ is any even integer greater than 0 and there are $n$ critical fibers.
        \item $M=(0,o_{1}|(2,1),\dots,(2,1),(1,-\frac{n}{2})$ and $n$ is any even integer greater than 2 and there are $n$ critical fibers.
    \end{enumerate}
\end{enumerate}

\section{Involutions on genus $g$ surfaces}

We use this section to summarize the work of \cite{dugger2019involutions}. The primary result we utilize is Theorem 5.7.

To do so, we define the three classes of orientation-preserving involutions and the two classes of orientation-reversing involutions. All are distinct up to conjugacy by a diffeomorphism of the surface. We describe the involutions, but for a more thorough treatment with images, see \cite{dugger2019involutions}.

\subsection{Orientation-preserving}

The first involution is the trivial involution. The second type is:

$$spit_{g,r}:S_g \rightarrow S_g$$

Which a rotation about an axis that fixes $g-2r$ holes and exchanges $r$ holes.

This involution has $4(g-2r)$ fixed points.

The third type is defined only when $g$ is odd:

$$rot:S_g\rightarrow S_g$$

This is a rotation about an axis that passes through the "center" hole.

This involution has no fixed points.

\subsection{Orientation-reversing}

The first class is the reflections that fix $r$ holes of the genus $g$ surface and exchange the others. We denote these as:

$$refl_{g,r}:S_g\rightarrow S_g$$

This involution has $g-2r+1$ disjoint circles of fixed points.

The second class is constructed by taking an antipodal map on a genus $g-r\leq g$ surface with $r\geq0$ handles. We denote these as:

$$anti_{g,r}:S_g \rightarrow S_g$$

This involution has $r$ disjoint circles of fixed points. If $r=0$ there are no fixed points.

\subsection{Theorem}

We state here the full theorem for completeness:

\begin{thm}
    For $g\geq0$ there are $4+2g$ conjugacy classes of involutions on the genus $g$ surface. 
    
    These are the $2+\lceil \frac{g}{2} \rceil$ orientation-preserving involutions:

    \begin{enumerate}
        \item $[id]$
        \item $[spit_{g,r}]$ for $0\leq r\leq \frac{g}{2}$
        \item $[rot]$ when $g$ is odd
    \end{enumerate}

and the $2+g+\lfloor \frac{g}{2} \rfloor$ orientation-preserving involutions:

    \begin{enumerate}
        \item $[refl_{g,r}]$ for $0\leq r\leq \frac{g}{2}$
        \item $[anti_{g,r}]$ for $0\leq r \leq g$
    \end{enumerate}
\end{thm}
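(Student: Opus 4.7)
The plan is to classify involutions on $S_g$ up to conjugacy via the topological type of the quotient orbifold, treating the orientation-preserving and orientation-reversing cases separately. Any involution $\sigma : S_g \to S_g$ determines a degree-two (branched) cover $S_g \to S_g/\langle \sigma \rangle$, and the first substantial step is to argue that two involutions whose quotient-orbifold data agree are conjugate by a self-diffeomorphism of $S_g$. I would handle this via the change-of-coordinates principle: any diffeomorphism of the quotient surface matching the singular data can be isotoped to a standard form and then lifted equivariantly, producing the desired conjugating diffeomorphism upstairs. With this reduction, the remainder of the proof becomes a case enumeration.

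For the orientation-preserving case, the fixed set consists of $k$ isolated points and the quotient is a closed orientable surface $\Sigma_h$ of genus $h$. The Riemann-Hurwitz formula $2 - 2g = 2(2 - 2h) - k$ yields $k = 2g + 2 - 4h$ and forces $0 \le h \le \lfloor (g+1)/2 \rfloor$. I would then match each admissible pair $(h,k)$ to a named class: the unramified case $(h, k) = (g, 0)$ gives $[id]$; the other unramified case $(h, k) = ((g+1)/2, 0)$, which requires $g$ odd, gives $[rot]$; and the branched cases with $k > 0$ correspond to $[spit_{g,r}]$ for a suitable index $r \in \{0, 1, \ldots, \lfloor g/2 \rfloor\}$. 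Counting produces $2 + \lceil g/2 \rceil$ classes.

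For the orientation-reversing case the fixed locus is a disjoint union of $c$ circles and $\chi(S_g/\sigma) = 1 - g$. The classification splits into the separating case (where the complement of the fixed locus has two components and the quotient is an orientable surface with $c$ boundary circles, giving $[refl_{g,r}]$) and the non-separating case (where the quotient is non-orientable, corresponding to an antipodal-type involution with handles attached, giving $[anti_{g,r}]$). In each subcase the quotient reduces to a single discrete parameter, yielding $\lfloor g/2 \rfloor + 1$ reflection classes and $g + 1$ antipodal classes, for a total of $2 + g + \lfloor g/2 \rfloor$. Summing the two counts produces $4 + 2g$.

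The main obstacle is the first step: establishing that the quotient orbifold data alone determines the conjugacy class of the involution. This requires a careful equivariant application of the isotopy extension theorem and the change-of-coordinates principle on marked (possibly non-orientable) surfaces, both to show that diffeomorphisms of the quotient lift and to show that any two involutions with the same quotient model are related by such a lift. Once that is settled, the enumeration via Riemann-Hurwitz and the final arithmetic $2 + \lceil g/2 \rceil + 2 + g + \lfloor g/2 \rfloor = 4 + 2g$ are routine.
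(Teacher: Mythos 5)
First, a point of reference: the paper does not prove this statement at all --- it is quoted verbatim (as ``Theorem 5.7'') from Dugger's paper \cite{dugger2019involutions}, ``stated for completeness,'' so there is no in-paper proof to match. Dugger's own argument is an equivariant one (building all involutions from standard models and separating them by explicit invariants), whereas you propose the classical route: classify by the quotient orbifold and count via Riemann--Hurwitz. That route can be made to work and your enumeration is numerically consistent with the theorem (including the parity bookkeeping that makes the orientation-preserving count $2+\lceil g/2\rceil$ and the orientation-reversing count $2+g+\lfloor g/2\rfloor$), so as a comparison of strategies your approach is a legitimate, more classical alternative.

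However, two concrete problems keep the sketch from being a proof. (a) The identity does not fit your framework: $\mathrm{id}$ is not a degree-two (branched) covering, and your own formula $k=2g+2-4h$ gives $k=2-2g<0$ at $(h,k)=(g,0)$ for $g\geq 2$; the class $[id]$ must simply be set aside before the Riemann--Hurwitz analysis, not listed as an ``unramified case.'' (b) The step you yourself flag as the main obstacle is genuinely missing, and the tools you name (change of coordinates, equivariant isotopy extension) are not the right ones by themselves. Two involutions with diffeomorphic quotient orbifolds correspond to two epimorphisms $\pi_1^{orb}(Q)\rightarrow\mathbb{Z}/2$ (sending every order-two cone generator, resp.\ every mirror boundary class, to the nontrivial element), and a diffeomorphism of $Q$ lifts to a conjugating diffeomorphism of $S_g$ only if it carries one kernel to the other. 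So the crux is a transitivity statement: the mapping class group of the quotient orbifold acts transitively on the admissible epimorphisms --- e.g.\ transitivity of $Sp(2h,\mathbb{Z}/2)$ on nonzero classes of $H^1(\Sigma_h;\mathbb{Z}/2)$ in the free orientation-preserving case ($[rot]$), uniqueness of the orientation double cover in the orientation-reversing case, and the analogous statement in the presence of cone points. Nothing in the proposal supplies this, and without it the claim ``same quotient data $\Rightarrow$ conjugate'' is unproven. Relatedly, in the orientation-reversing case the constraints that actually produce the counts ($1\leq c\leq g+1$ with $c\equiv g+1 \pmod 2$ in the separating case, $0\leq c\leq g$ in the non-separating case) are asserted rather than derived; they are exactly where the ranges $0\leq r\leq g/2$ and $0\leq r\leq g$ come from.
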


As an example, here we take the torus:

\begin{exmp}
    As $g=1$ there are 3 orientation-preserving and 3 orientation-preserving involutions up to conjugation. As to the orientation-reversing involutions. Within Example 10.2 of \cite{dugger2019involutions}, we have that for the orientation-reversing involutions, up to conjugation, the induced involutions on $H_1(T)$ given by:
    $$\left[\begin{array}{cc}
1 & 0\\
0 & -1
\end{array}\right]$$ for $[anti_{1,0}]$ and $[refl_{1,0}]$
and $$\left[\begin{array}{cc}
0 & 1\\
1 & 0
\end{array}\right]$$ for $[anti_{1,1}]$.
    
\end{exmp}

Now, by Section 3.2, we can disregard the fixed-point free involutions $rot$ for $g$ odd and $anti_{g,0}$ from consideration in this paper.

\section{Preliminary results}

We now prove the following propositions:

\begin{prop}
Let $F$ be a surface with a single boundary component and let $\hat{M}\cong S^1\times F$ be fibered according to a fibering product structure $k_{\hat{M}}:S^1\times F\rightarrow \hat{M}$. Suppose $M$ is obtained by a $(1,2)$ Dehn filling of the boundary component. If $f_{\hat{M}}:\hat{M}\rightarrow \hat{M}$ is a fiber-preserving and orientation-reversing involution then it extends over the fillings to a fiber-preserving, orientation-reversing involution on $M$ if and only if:

$$((k_{\hat{M}}^{-1} \circ f_{\hat{M}} \circ k_{\hat{M}})|_{S^1 \times \partial F})_*=\pm \left[\begin{array}{cc}
1 & -1\\
0 & -1
\end{array}\right]$$
\end{prop}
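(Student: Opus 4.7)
The plan is to translate each of the conditions --- fiber-preserving, involution, orientation-reversing, and extending across the $(1,2)$ Dehn filling --- into algebraic constraints on the matrix $A=((k_{\hat{M}}^{-1} \circ f_{\hat{M}} \circ k_{\hat{M}})|_{S^{1}\times\partial F})_{*}$ acting on $H_{1}(S^{1}\times\partial F)\cong\mathbb{Z}^{2}$, with basis given by the fiber direction and the base-boundary direction, and then to verify the converse by constructing the extension explicitly.

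First I would extract the necessary conditions. Being fiber-preserving forces the fiber class to map to $\pm$ itself, so
$$A=\left[\begin{array}{cc} \epsilon & b \\ 0 & d \end{array}\right]$$
with $\epsilon\in\{\pm1\}$ and $b,d\in\mathbb{Z}$. The involution condition $A^{2}=I$ yields $d^{2}=1$ and $b(\epsilon+d)=0$. Because $\hat{M}$ has a single, positively oriented boundary torus, Corollary 3.2 gives that $f_{\hat{M}}$ is orientation-reversing on $\hat{M}$ if and only if it is orientation-reversing on that torus, i.e.\ $\det A=\epsilon d=-1$; combined with $d^{2}=1$ this forces $d=-\epsilon$ and makes the relation $b(\epsilon+d)=0$ vacuous. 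Hence $A$ lies in one of the two families $\left[\begin{array}{cc} 1 & b \\ 0 & -1 \end{array}\right]$ or $\left[\begin{array}{cc} -1 & b \\ 0 & 1 \end{array}\right]$ with $b\in\mathbb{Z}$.

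For the extension across the $(1,2)$ Dehn filling to exist, $A$ must send the meridional slope of the attached solid torus --- the primitive class $(1,2)^{T}$ --- to $\pm$ itself, since otherwise the image of the meridional disc cannot be capped off. Imposing $A\cdot(1,2)^{T}=\pm(1,2)^{T}$ in each of the two families pins down $b$ uniquely: $b=-1$ in the first case and $b=+1$ in the second, and the two resulting matrices differ only by an overall sign. This yields precisely $A=\pm\left[\begin{array}{cc} 1 & -1 \\ 0 & -1 \end{array}\right]$.

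For the converse, assume $A$ has the stated form. Since $f_{\hat{M}}|_{\partial\hat{M}}$ preserves the filling slope and is an orientation-reversing involution of the boundary torus, it admits a cone extension to the attached solid torus $V$. I would write this extension down explicitly in coordinates compatible with the $(2,1)$ Seifert fibering of $V$, so that the extended map is fiber-preserving, orientation-reversing, and of order two on $V$; gluing with $f_{\hat{M}}$ then produces the desired involution on $M$. The main subtlety is verifying that the natural cone really has order two and reverses orientation consistently across the gluing, rather than merely achieving these properties up to isotopy --- this is handled by a short local-coordinate calculation around the critical fiber, and involves no essential difficulty once the matrix form is fixed.
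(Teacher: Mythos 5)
Your necessity direction is correct and is essentially an equivalent, slightly more direct version of the paper's computation: the paper works in the coordinates of the filling torus (fiber $\mapsto\pm$fiber, meridian $\mapsto\pm$meridian, exactly one sign reversed) and transports the answer to $\partial\hat{M}$ through the gluing matrix $\left[\begin{smallmatrix}0&1\\1&2\end{smallmatrix}\right]$, whereas you impose the same constraints (fiber class fixed up to sign, $\det=-1$ via Corollary 3.2, meridional slope $(1,2)^{T}$ preserved up to sign) directly in the $k_{\hat{M}}$-basis; both pin down $\pm\left[\begin{smallmatrix}1&-1\\0&-1\end{smallmatrix}\right]$.

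The gap is in the converse, precisely at the point you dismiss as "no essential difficulty." Knowing $((k_{\hat{M}}^{-1}\circ f_{\hat{M}}\circ k_{\hat{M}})|_{S^1\times\partial F})_*$ tells you only the homological (isotopy) class of the boundary restriction; the actual map $f_{\hat{M}}|_{\partial\hat{M}}$ is an arbitrary smooth involution in that class, and such a map has no "natural cone": the radial coning formula is not even well-defined at the core for a general torus diffeomorphism, and when it is defined there is no reason the result is of order two or compatible with the induced $(-2,1)$ fibration near the exceptional fiber. This is exactly the content the paper has to supply: it invokes Dugger's conjugacy classification of torus involutions (Propositions 10.5 and 10.6) to conjugate the boundary involution to the standard linear model $g(u,v)=(u^{\pm1},u^{\mp1}v^{\mp1})$ by a diffeomorphism $h$ isotopic to $\pm\mathrm{id}$, extends only the model map by coning ($\overline{g}(u,\rho v)=(u,\rho u^{-1}v^{-1})$), and then compensates for $h$ by refilling along $d\circ h$ to get a manifold $M'$ fiber-preservingly isomorphic to $M$ via a map that is the identity on $\hat{M}$, finally conjugating the extension back. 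Without some version of this standardization-plus-regluing step (or an equivalent equivariant-extension argument), your claim that the given boundary involution "admits a cone extension" that is simultaneously smooth, of order two, and fiber-preserving is unjustified, and a "short local-coordinate calculation around the critical fiber" cannot be carried out because you have no normal form for the map you are coning. To repair the proposal you would need to add exactly this ingredient: reduce the boundary involution to a linear model up to conjugacy by a map isotopic to the identity, and account for the conjugating map either by conjugating the extension or by changing the filling diffeomorphism.
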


\begin{proof}

    We first note that the filling torus $V$ will have an induced $(-2,1)$ fibration according to some product structure $k_V:S^1\times D \rightarrow V$. See \cite{peet2019finite} for more details. Hence to extend within this solid torus, the map will need to preserve the meridian - this follows from the half-lives half-dies theorem, see Lemma 3.5 of \cite{hatcher2000notes} - but also the fibration. The map must either reverse the orientation of the fibers or the orientation of a meridian (but not both).
    
    Hence if we represent the map on $\partial V$ homologically (according to the induced product structure from $k_V$) as $\left[\begin{array}{cc}
a & b\\
c & d
\end{array}\right]$ then:
$$\left[\begin{array}{cc}
a & b\\
c & d
\end{array}\right]\left[\begin{array}{cc}
0 \\
1 
\end{array}\right]=\varepsilon\left[\begin{array}{cc}
0 \\
-1 
\end{array}\right]
$$

and 

$$\left[\begin{array}{cc}
a & b\\
c & d
\end{array}\right]\left[\begin{array}{cc}
-2 \\
1 
\end{array}\right]=\varepsilon\left[\begin{array}{cc}
-2 \\
1 
\end{array}\right]$$

Where $\varepsilon=\pm 1$.

The only solutions to this are $$\pm \left[\begin{array}{cc}
1 & 0\\
-1 & -1
\end{array}\right]$$

We then see that the following homological diagram (according to the induced product structures on the boundaries from $k_{\hat{M}}$ and $k_V$) is yielded:

$$\begin{array}{ccccc}
 &  & \left[\begin{array}{cc}
0 & 1\\
1 & 2
\end{array}\right]\\
 & H_{1}(\partial \hat{M}) & \leftarrow & H_{1}(\partial V)\\
 & \downarrow &  & \downarrow & \pm \left[\begin{array}{cc}
1 & 0\\
-1 & -1
\end{array}\right]\\
 & H_{1}(\partial\hat{M}) & \leftarrow & H_{1}(\partial V)\\
 &  & \left[\begin{array}{cc}
0 & 1\\
1 & 2
\end{array}\right]
\end{array}$$

Hence, $$((k_{\hat{M}}^{-1} \circ f_{\hat{M}} \circ k_{\hat{M}})|_{S^1 \times \partial F})_*=\pm \left[\begin{array}{cc}
1 & -1\\
0 & -1
\end{array}\right]$$

For the converse, note that if $\partial\hat{M}$ is filled with filling map $d:\partial V \rightarrow \partial \hat{M}$, then the map:

$$d^{-1} \circ f_{\hat{M}}|_{\partial \hat{M}} \circ d$$

Sends the boundary of $V$ to the boundary of $V$. It remains to show that this map can be extended within the solid torus to a map that is both fiber-preserving and an involution.

To do so, we note that:

$$((k_V|_{S^1 \times \partial D})^{-1} \circ(d^{-1} \circ f_{\hat{M}}|_{\partial \hat{M}} \circ d)\circ (k_V|_{S^1 \times \partial D}))_*=\pm \left[\begin{array}{cc}
1 & 0\\
-1 & -1
\end{array}\right]$$

So then by Propositions 10.5 and 10.6 of \cite{dugger2019involutions}, this is conjugate in $GL_2(\mathbb{Z})$ to $\left[\begin{array}{cc}
0 & 1\\
1 & 0
\end{array}\right]$.

Hence, we must have that $(k_V|_{S^1 \times \partial D})^{-1} \circ(d^{-1} \circ f_{\hat{M}}|_{\partial \hat{M}} \circ d)\circ (k_V|_{S^1 \times \partial D})$ is conjugate to $anti_{1,1}$ via some $h_1\in Diff^{+}(S^1\times \partial D)$ and in particular we can take it to be without loss of generality by an orientation-preserving diffeomorphism (if not take $h_1\circ anti_{1,1}$) with homological representative either $\pm\left[\begin{array}{cc}
-1 & -1\\
1 & 0
\end{array}\right]$. See \cite{dugger2019involutions} for the algebraic calculations. So then:

$$anti_{1,1}=h_1^{-1} \circ ((k_V|_{S^1 \times \partial D})^{-1} \circ(d^{-1} \circ f_{\hat{M}}|_{\partial \hat{M}} \circ d)\circ (k_V|_{S^1 \times \partial D})) \circ h_1$$

We use here the fact that the mapping class group of the torus is in one-to-one correspondence with $GL_2(\mathbb{Z})$. See \cite{stillwell2012classical} for a thorough proof.

However, considering the maps $g\in Diff(S^1\times \partial D)$ with $g_{\pm}(u,v)=(u^{\pm 1},u^{\mp 1}v^{\mp 1})$ we note that $g_{\pm}*=\pm\left[\begin{array}{cc}
1 & 0\\
-1 & -1
\end{array}\right]$ and by a similar argument to above, there is a $h_2 \in Diff^{+}(S^1 \times \partial D)$ such that:

$$anti_{1,1}=h_2^{-1} \circ g \circ h_2$$

So in particular:

$$g=h_2\circ h_1^{-1} \circ [(k_V|_{S^1 \times \partial D})^{-1} \circ(d^{-1} \circ f_{\hat{M}}|_{\partial \hat{M}} \circ d)\circ (k_V|_{S^1 \times \partial D})] \circ h_1 \circ h_2^{-1}$$

So now $(k_V|_{S^1 \times \partial D})^{-1} \circ(d^{-1} \circ f_{\hat{M}}|_{\partial \hat{M}} \circ d)\circ (k_V|_{S^1 \times \partial D)}$ is conjugate to $g$ via an orientation-preserving map $h=h_1 \circ h_2^{-1}$, and in particular:

$$h_*=\left[\begin{array}{cc}
1 & 1\\
-1 & 0
\end{array}\right](\pm\left[\begin{array}{cc}
1 & 1\\
-1 & 0
\end{array}\right])^{-1}=\pm I$$

Then $(k_V|_{S^1 \times \partial D})^{-1} \circ(d^{-1} \circ f_{\hat{M}}|_{\partial \hat{M}} \circ d)\circ (k_V|_{S^1 \times \partial D})$ is conjugate to $g$ via a conjugating map that is isotopic to plus or minus the identity, and importantly $k_V \circ g\circ k_V^{-1}$ is fiber-preserving (fibers are given by $k_V(S^{1}\times {u})$) and can be extended within the solid torus by $\overline{g}(u,\rho v)=(u,\rho u^{-1}v^{-1})$. This is still fiber-preserving and an involution.

Let the Seifert manifold $M'$ be obtained by filling according to $d \circ h$. This is isomorphic to $M$ by some fiber-preserving diffeomorphism $\varphi:M\rightarrow M'$ which is the identity when restricted to $\hat{M}$.

$f$ can be extended to a fiber-preserving involution on $M'$ as $\overline{f'}$ and then $\overline{f}=\varphi \circ \overline{f'} \circ \varphi^{-1}$ is $f$ when restricted to $\hat{M}$ and still a fiber-preserving involution.

\end{proof}

\begin{prop}
Let $F$ be a surface with a single boundary component and let $\hat{M}\cong S^1\times F$ be fibered according to a fibering product structure $k_{\hat{M}}:S^1\times F\rightarrow \hat{M}$. Suppose $M$ is obtained by a $(x,1)$ Dehn filling of the boundary component for some $x\in \mathbb{Z}$. If $f_{\hat{M}}:\hat{M}\rightarrow \hat{M}$ is a fiber-preserving and orientation-reversing involution then it extends over the fillings to a fiber-preserving, orientation-reversing involution on $M$ if and only if:

$$((k_{\hat{M}}^{-1} \circ f_{\hat{M}} \circ k_{\hat{M}})|_{S^1 \times \partial F})_*=  \pm \left[\begin{array}{cc}
1 & -2x\\
0 & -1
\end{array}\right]$$
\end{prop}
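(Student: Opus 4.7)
The plan is to mirror the proof of Proposition 5.1 with the Seifert data of $V$ adapted to an $(x,1)$ Dehn filling. Such a filling corresponds to a $(1,x)$ Seifert-invariant filling, so $V$ inherits a regular-fiber fibering and a product structure $k_V\colon S^1\times D\to V$ can be chosen so that the $S^1$-factor is the Seifert fiber. In the induced $k_V$-framing on $\partial V$ the meridian of $V$ is represented by $(0,1)^T$ and the fiber by $(1,0)^T$. Since the gluing sends the meridian of $V$ to the filling slope $(x,1)^T$ in the $k_{\hat{M}}$-framing, and the fiber of $V$ to the fiber $(1,0)^T$ of $\hat{M}$, the transition matrix from $k_V$ to $k_{\hat{M}}$ framings on the common boundary is
\[G=\left[\begin{array}{cc} 1 & x\\ 0 & 1 \end{array}\right].\]

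For the forward direction the argument of Proposition 5.1 applies essentially verbatim. For the involution to extend across the meridian disc of $V$ it must send the meridian to $\pm$ itself (half-lives half-dies); to remain fiber-preserving it must send the fiber to $\pm$ itself; and exactly one of these two orientations must flip so the map is orientation-reversing. Writing a generic $\left[\begin{array}{cc}a & b\\ c & d\end{array}\right]$ and imposing these three conditions forces the $k_V$-framing matrix to be $\pm\left[\begin{array}{cc}1 & 0\\ 0 & -1\end{array}\right]$, and a direct two-step multiplication gives
\[G\left[\begin{array}{cc}1 & 0\\ 0 & -1\end{array}\right]G^{-1}=\left[\begin{array}{cc}1 & -2x\\ 0 & -1\end{array}\right],\]
yielding the claimed condition in the $k_{\hat{M}}$-framing.

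For the converse, the $k_V$-framing matrix $\pm\left[\begin{array}{cc}1 & 0\\ 0 & -1\end{array}\right]$ is already that of a product involution on $S^1\times\partial D$, namely $(u,v)\mapsto(u,v^{-1})$ or $(u,v)\mapsto(u^{-1},v)$, which extends directly over $V=S^1\times D$ as the fiber-preserving orientation-reversing map $(u,w)\mapsto(u,\bar{w})$ or $(u,w)\mapsto(u^{-1},w)$. Since the actual boundary involution is only isotopic (not equal) to such a product, the proof closes with the same refilling construction as at the end of Proposition 5.1: the conjugating diffeomorphism on the boundary torus has homological representative $\pm I$, so the resulting Seifert manifold $M'$ is fiber-preservingly diffeomorphic to $M$ via a map that is the identity on $\hat{M}$, and transporting the constructed extension back yields the desired involution on $M$.

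The principal obstacle is bookkeeping the conventions: identifying the induced Seifert invariant on $V$, choosing $k_V$ consistently, and aligning the Dehn-filling and homological sign conventions so that the conjugation $GMG^{-1}$ lands on exactly $\pm\left[\begin{array}{cc}1 & -2x\\ 0 & -1\end{array}\right]$ rather than an off-by-sign variant. Once these are pinned down the forward direction is a mechanical calculation, and the converse is easier than that of Proposition 5.1, because the triviality of the induced fibering on $V$ makes the $k_V$-framing matrix already a product, removing the need for any Dugger-style conjugation to a normal form such as $anti_{1,1}$.
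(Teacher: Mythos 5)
Your forward direction is essentially the paper's argument: the extension condition on $\partial V$ (meridian to $\pm$ meridian, fiber to $\pm$ fiber, exactly one sign reversed) forces $\pm\left[\begin{smallmatrix}1 & 0\\ 0 & -1\end{smallmatrix}\right]$ in the $k_V$-framing, and conjugating by the gluing matrix gives $\pm\left[\begin{smallmatrix}1 & -2x\\ 0 & -1\end{smallmatrix}\right]$. (Your transition matrix $\left[\begin{smallmatrix}1 & x\\ 0 & 1\end{smallmatrix}\right]$ differs from the paper's $\left[\begin{smallmatrix}-1 & x\\ 0 & 1\end{smallmatrix}\right]$ by a sign convention on the fiber column, but since $\mathrm{diag}(1,-1)$ commutes with $\mathrm{diag}(-1,1)$ the conjugation lands on the same matrix, so this is harmless.)

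The converse is where you have a genuine gap. You assert that because the homological representative $\pm\left[\begin{smallmatrix}1 & 0\\ 0 & -1\end{smallmatrix}\right]$ "is already that of a product involution," no Dugger-style conjugation to a normal form is needed, and that the actual boundary involution is "only isotopic" to the product with a conjugating map of homological representative $\pm I$. But the hypothesis only controls the induced map on $H_1(\partial V)$, not the diffeomorphism itself, and the homological representative does not determine the conjugacy class: both $refl_{1,0}$ (two fixed circles) and the free involution $anti_{1,0}$ induce $\mathrm{diag}(1,-1)$, as the paper itself notes in its torus example. So the statement that the boundary restriction of $f_{\hat M}$ is conjugate to the product reflection by a map isotopic to $\pm\,\mathrm{id}$ is exactly the nontrivial step; it is what the paper extracts from Proposition 10.6 of Dugger (together with the argument that the conjugator $h$ can be taken orientation-preserving with $h_*=\pm I$), and it is what makes the refilling trick legitimate, since $h_*=\pm I$ is needed to guarantee that filling along $d\circ h$ produces a Seifert manifold $M'$ with the same invariants as $M$ via a fiber-preserving diffeomorphism that is the identity on $\hat M$. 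Without that classification input your argument only extends a standard model map, not the given $f_{\hat M}|_{\partial\hat M}$. Moreover, if the boundary involution lies in the free class $anti_{1,0}$, the correct extendable model is $(u,w)\mapsto(-u,\bar w)$ rather than $(u,w)\mapsto(u,\bar w)$, a case your write-up does not address. So the converse needs the same conjugation-to-normal-form step as Proposition 5.1 (with $refl_{1,0}$, and possibly the free model, in place of $anti_{1,1}$); claiming it can be skipped is where the proposal fails.
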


\begin{proof}
The proof mimics the previous proposition. 

This time the filling tori will have induced $(1,0)$ fibrations. Again, the map will need to preserve the meridian and the fibration reversing orientation of one but not both. Hence:
$$\left[\begin{array}{cc}
a & b\\
c & d
\end{array}\right]\left[\begin{array}{cc}
0 \\
1 
\end{array}\right]=\varepsilon\left[\begin{array}{cc}
0 \\
-1 
\end{array}\right]
$$

and 

$$\left[\begin{array}{cc}
a & b\\
c & d
\end{array}\right]\left[\begin{array}{cc}
1 \\
0 
\end{array}\right]=\varepsilon\left[\begin{array}{cc}
1 \\
0 
\end{array}\right]$$

Here $\varepsilon=\pm 1$.

The only solution to this is:
$$\pm \left[\begin{array}{cc}
1 & 0\\
0 & -1
\end{array}\right]$$

We then see that the following homological diagram is yielded:

$$\begin{array}{ccccc}
 &  & \left[\begin{array}{cc}
-1 & x\\
0 & 1
\end{array}\right]\\
 & H_{1}(\partial \hat{M}) & \leftarrow & H_{1}(\partial V)\\
 & \downarrow &  & \downarrow & \pm \left[\begin{array}{cc}
1 & 0\\
0 & -1
\end{array}\right]\\
 & H_{1}(\partial\hat{M}) & \leftarrow & H_{1}(\partial V)\\
 &  & \left[\begin{array}{cc}
-1 & x\\
0 & 1
\end{array}\right]
\end{array}$$

Hence: $$((k_{\hat{M}}^{-1} \circ f_{\hat{M}} \circ k_{\hat{M}})|_{S^1 \times \partial F})_*=\pm\left[\begin{array}{cc}
1 & -2x\\
0 & -1
\end{array}\right] $$

We employ Proposition 10.6 of \cite{dugger2019involutions} once again to yield some $h\in Diff(S^1 \times \partial D)$ such that:

$$g=h^{-1} \circ ((k_V|_{S^1 \times \partial D})^{-1} \circ(d^{-1} \circ f_{\hat{M}}|_{\partial \hat{M}} \circ d)\circ (k_V|_{S^1 \times \partial D} ))\circ h$$

Where $g(u,v)=(u^{\pm 1}, v^{\mp 1})$.

Once again, we can assume $h\in Diff^+(S^1\times \partial D)$ and that it is isotopic to plus or minus the identity by a simple calculation and composing with a $refl_{1,0}$ if orientation-reversing.

So now $(k_V|_{S^1 \times \partial D})^{-1} \circ(d^{-1} \circ f_{\hat{M}}|_{\partial \hat{M}} \circ d)\circ (k_V|_{S^1 \times \partial D}) $ is conjugate to $refl_{1,0}$ via a conjugating map that is isotopic to plus or minus the identity. $(k_V|_{S^1\times \partial D})^{-1} \circ refl_{1,0} \circ (k_V|_{S^1\times \partial D})$ is certainly fiber-preserving (fibers are given by $k_V(S^{1}\times {u})$) can be extended within the solid torus by $\overline{g}(u,\rho v)=(u^{\pm 1},\rho v^{\mp 1})$. This is still fiber-preserving and an involution.

Once more, the Seifert manifold $M'$ obtained by filling according to $d \circ h$ is isomorphic to $M$ by a fiber-preserving diffeomorphism $\varphi:M\rightarrow M'$ which is the identity when restricted to $\hat{M}$.

$f$ can be extended to a fiber-preserving involution on $M'$ as $\overline{f'}$ and then $\overline{f}=\varphi \circ \overline{f'} \circ \varphi^{-1}$ is $f$ when restricted to $\hat{M}$ and still a fiber-preserving involution.

\end{proof}

\section{Construction of the fiber- and orientation-reversing involutions $\Psi$}

This section lays out the construction of a class of fiber- and orientation-reversing involutions that we denote $\Psi$. These involutions are all the identity on the base space.

\subsection{Construction of $V(2,2;-1)$}

We first consider the construction of $V(2,2;-1)$. This is constructed by taking a solid torus $V$ with product structure $k_V:S^1\times D \rightarrow V$ and trivial fibering given by fibers $k_V(S^1\times\{x\})$ and then drilling out fibered torus neighborhoods of three interior fibers and gluing in three solid tori according to the Seifert invariants $(2,1)$,$(2,1)$, and $(1,-1)$.

More precisely, let $\hat{V}$ be $V$ with three fibered torus neighborhoods of interior fibers drilled out have fibering product structure $k_{\hat{V}}:S^1\times (D\setminus(D_1 \cup D_2 \cup D_3))\rightarrow \hat{V}$ and let the inner torus boundaries be denoted $T_1$,$T_2$, and $T_3$ with the outer denoted $T$. Then if $X=V_1\cup V_2 \cup V_3$  is a disjoint union of three solid tori, then we have a gluing map $d:\partial X \rightarrow \partial \hat{V}$ and some product structure $k_X:S^1 \times (D_1\cup D_2 \cup D_3)\rightarrow X$ on $X$ so that the restricted positively oriented product structures $k_{\partial \hat{V}_i}: S^1\times S^1\rightarrow \partial \hat{V}_i$ and $k_{T_i}:S^1 \times S^1 \rightarrow T_i$ obey the following:

$$(k_{T_i}^{-1} \circ d|_{\partial V_i} \circ k_{\partial V_i})(u,v)=(v,uv^2)$$ for $i=1,2$

$$(k_{T_3}^{-1} \circ d|_{\partial V_3} \circ k_{\partial V_3})(u,v)=(u^{-1}v^{-1},v)$$ for $i=3$

Note that the boundary torus of this $V(2,2;-1)$ still has the restricted positively oriented fibering product structure $k_T:S^1 \times S^1 \rightarrow T$.

\subsection{Construction of $M=(g,o_1|(2,1),\ldots,(2,1),(1,-\frac{n}{2}))$}

It now follows that $M=(g,o_1|(2,1),\ldots,(2,1),(1,-\frac{n}{2}))$ can be yielded by viewing it as $M=(g,o_1|(2,1),\ldots,(2,1),(1,-1),\ldots,(1,-1))$, that is, $n$ $(2,1)$ fillings and $\frac{n}{2}$ $(1,-1)$ fillings. Then taking $\hat{M}$ with a fibering product structure $k_{\hat{M}}:S^1 \times F \rightarrow \hat{M}$ and $F$ a genus $g$ surface with filling each boundary torus with a $V(2,2;-1)$ according to a trivial filling.

That is, for a boundary component $(\partial \hat{M})_i$ with restricted, positively oriented fibering product structure $k_{(\partial \hat{M})_i}:S^1 \times S^1 \rightarrow (\partial \hat{M})_i$ we can fill using some  $d:T\rightarrow (\partial \hat{M})_i$ so that:

$$(k_{(\partial \hat{M})_i}^{-1} \circ d \circ k_T)(u,v)=(u,v)$$

\subsection{Construction of fiber- and orientation-reversing involution}

We pick a fiber-preserving and orientation-reversing involution $\psi_{\hat{M}}\in  Diff(\hat{M})$ by: 
$$(k_{\hat{M}}^{-1}\circ \psi_{\hat{M}} \circ k_{\hat{M}})(u,x)=(u^{-1},x)$$

\subsection{Fiber-preserving and orientation reversing involution on $V(2,2;-1)$}

For convenience we use $S^1\times((3I\times3I)\setminus(D_1\cup D_2\cup D_3))$ in place of $S^1\times (D\setminus(D_1 \cup D_2 \cup D_3))$, that is, the product structure on $\hat{V}$ is now $k_{\hat{V}}:S^1\times ((3I\times3I))\setminus(D_1 \cup D_2 \cup D_3)) \rightarrow \hat{V}$

\tikzset{every picture/.style={line width=0.75pt}}
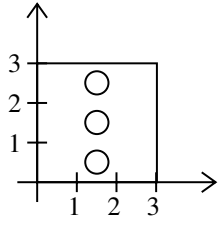
\begin{figure}
\centering
\begin{tikzpicture}[x=0.75pt,y=0.75pt,yscale=-1,xscale=1]

\draw   (139.75,90) -- (200.14,90) -- (200.14,150) -- (139.75,150) -- cycle ;
\draw  (129.75,150) -- (229.75,150)(139.75,60) -- (139.75,160) (222.75,145) -- (229.75,150) -- (222.75,155) (134.75,67) -- (139.75,60) -- (144.75,67) (159.75,145) -- (159.75,155)(179.75,145) -- (179.75,155)(199.75,145) -- (199.75,155)(134.75,130) -- (144.75,130)(134.75,110) -- (144.75,110)(134.75,90) -- (144.75,90) ;
\draw   (166.75,162) node[anchor=east, scale=1]{1} (186.75,162) node[anchor=east, scale=1]{2} (206.75,162) node[anchor=east, scale=1]{3} (136.75,130) node[anchor=east, scale=1]{1} (136.75,110) node[anchor=east, scale=1]{2} (136.75,90) node[anchor=east, scale=1]{3} ;
\draw   (164,139.86) .. controls (164,136.62) and (166.62,134) .. (169.86,134) .. controls (173.09,134) and (175.71,136.62) .. (175.71,139.86) .. controls (175.71,143.09) and (173.09,145.71) .. (169.86,145.71) .. controls (166.62,145.71) and (164,143.09) .. (164,139.86) -- cycle ;
\draw   (164,99.86) .. controls (164,96.62) and (166.62,94) .. (169.86,94) .. controls (173.09,94) and (175.71,96.62) .. (175.71,99.86) .. controls (175.71,103.09) and (173.09,105.71) .. (169.86,105.71) .. controls (166.62,105.71) and (164,103.09) .. (164,99.86) -- cycle ;
\draw   (164,119.86) .. controls (164,116.62) and (166.62,114) .. (169.86,114) .. controls (173.09,114) and (175.71,116.62) .. (175.71,119.86) .. controls (175.71,123.09) and (173.09,125.71) .. (169.86,125.71) .. controls (166.62,125.71) and (164,123.09) .. (164,119.86) -- cycle ;

\end{tikzpicture}
\caption{$3I\times3I$ less three discs}
\end{figure}

We define $\psi_{\hat{V}}$ by:

$$k^{-1}_{\hat{V}}\circ \psi_{\hat{V}}\circ k_{\hat{V}}(u,x,1)=(u^{-1}e^{-2\pi ix},x,1)$$
$$k^{-1}_{\hat{V}}\circ \psi_{\hat{V}}\circ k_{\hat{V}}(u,x,2)=(u^{-1}e^{2\pi ix},x,2)$$

and then:

$$k^{-1}_{\hat{V}}\circ \psi_{\hat{V}}\circ k_{\hat{V}}(u,x,y)=(u^{-1},x,y)$$ 

if either $x=0,1$ or $y=0,1$. That is, on the boundary of $\hat{V}$.

Then by coning inwards towards the centers of each of the removed discs we see that homologically according to the framing given by the product structure we have:
$$((k^{-1}_{\hat{V}}\circ \psi_{\hat{V}}\circ k_{\hat{V}})|_{\partial D_1})*=\left[\begin{array}{cc}
-1 & 1\\
0 & 1
\end{array}\right]$$ $$((k^{-1}_{\hat{V}}\circ \psi_{\hat{V}}\circ k_{\hat{V}})|_{\partial D_2})*=\left[\begin{array}{cc}
-1 & -2\\
0 & 1
\end{array}\right]$$
$$((k^{-1}_{\hat{V}}\circ \psi_{\hat{V}}\circ k_{\hat{V}})|_{\partial D_3})*=\left[\begin{array}{cc}
-1 & 1\\
0 & 1
\end{array}\right]$$

Hence we can extend across the fillings by Propositions 5.1 and 5.2.

Note that effectively, this map can be thought of as reversing the orientation of the fibers and then performing Dehn twists along the annuli $S^1 \times I$ where the intervals $I$ are shown below as bold lines:

\tikzset{every picture/.style={line width=0.75pt}} 
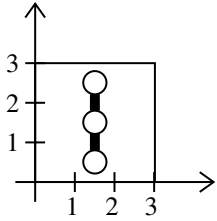
\begin{figure}[ht]
\centering
\begin{tikzpicture}[x=0.75pt,y=0.75pt,yscale=-1,xscale=1]

\draw   (139.75,90) -- (200.14,90) -- (200.14,150) -- (139.75,150) -- cycle ;
\draw  (129.75,150) -- (229.75,150)(139.75,60) -- (139.75,160) (222.75,145) -- (229.75,150) -- (222.75,155) (134.75,67) -- (139.75,60) -- (144.75,67) (159.75,145) -- (159.75,155)(179.75,145) -- (179.75,155)(199.75,145) -- (199.75,155)(134.75,130) -- (144.75,130)(134.75,110) -- (144.75,110)(134.75,90) -- (144.75,90) ;
\draw   (166.75,162) node[anchor=east, scale=1]{1} (186.75,162) node[anchor=east, scale=1]{2} (206.75,162) node[anchor=east, scale=1]{3} (136.75,130) node[anchor=east, scale=1]{1} (136.75,110) node[anchor=east, scale=1]{2} (136.75,90) node[anchor=east, scale=1]{3} ;
\draw   (164,139.86) .. controls (164,136.62) and (166.62,134) .. (169.86,134) .. controls (173.09,134) and (175.71,136.62) .. (175.71,139.86) .. controls (175.71,143.09) and (173.09,145.71) .. (169.86,145.71) .. controls (166.62,145.71) and (164,143.09) .. (164,139.86) -- cycle ;
\draw   (164,99.86) .. controls (164,96.62) and (166.62,94) .. (169.86,94) .. controls (173.09,94) and (175.71,96.62) .. (175.71,99.86) .. controls (175.71,103.09) and (173.09,105.71) .. (169.86,105.71) .. controls (166.62,105.71) and (164,103.09) .. (164,99.86) -- cycle ;
\draw   (164,119.86) .. controls (164,116.62) and (166.62,114) .. (169.86,114) .. controls (173.09,114) and (175.71,116.62) .. (175.71,119.86) .. controls (175.71,123.09) and (173.09,125.71) .. (169.86,125.71) .. controls (166.62,125.71) and (164,123.09) .. (164,119.86) -- cycle ;
\draw [line width=3.75]    (169.86,125.71) -- (169.86,134) ;
\draw [line width=3.75]    (169.86,114) -- (169.86,105.71) ;

\end{tikzpicture}

\caption{Dehn twist annuli}
\end{figure}

\subsection{The final involution}

We now have described involutions on $\hat{M}$ and $V(2,2;-1)$. These agree on the boundaries according to the trivial gluing. 

\subsection{Remarks on $\Psi$}

Note that in the construction of a given $\psi$, choices are made with regards to the product structures, but importantly the decomposition into $\hat{M}$ and a collection of $V(2,2;-1)$s. For example, given four critical fibers of order two, there are two possible ways to pair off the fibers. However, we will establish that $\Psi$ is in fact a conjugacy class under conjugation by fiber-preserving diffeomorphisms as Proposition 7.2.

\subsection{Corollary to Propositions 5.1 and 5.2}
\begin{cor}
    Suppose that $f$ is a fiber-preserving and orientation-reversing involution of $V(2,2;-1)$ according to some boundary product structure $k:S^1 \times S^1 \rightarrow \partial V$. Then $f$ preserves the product structure up to homology. 
\end{cor}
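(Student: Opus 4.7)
The plan is to show that the homological matrix $M$ of $f|_{\partial V}$, written in the basis $(\phi,\ell)$ of $H_1(\partial V)$ induced by the product structure $k$ (where $\phi$ is the fiber class and $\ell$ is the longitude class), is diagonal with entries in $\{\pm1\}$, i.e., the matrix of a product diffeomorphism of the torus. Fiber-preservation pins the first column of $M$ to $(\varepsilon_1,0)$ with $\varepsilon_1=\pm1$, and invertibility then gives $M=\left[\begin{array}{cc}\varepsilon_1 & a\\ 0 & \varepsilon_2\end{array}\right]$ with $\varepsilon_2=\pm1$, so the whole corollary reduces to showing $a=0$.

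The strategy is to work inside $H_1(V(2,2;-1))$. The three Seifert fillings described in Section~5.1 contribute the relations $\phi+2\ell_1=0$, $\phi+2\ell_2=0$, and $\ell_3=\phi$, and a routine Smith-normal-form calculation then gives $H_1(V(2,2;-1))\cong\mathbb{Z}\oplus\mathbb{Z}/2$, with the free summand containing $\phi=-2\ell_1$ and the torsion generated by $t=\ell_2-\ell_1$. The planar-surface identity $\ell=\ell_1+\ell_2+\ell_3$ in $H_1(\hat{V})$, combined with these relations, collapses to $\ell=t$ inside $V(2,2;-1)$: the outer longitude is precisely the nontrivial $2$-torsion element.

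Because $f_*$ is an order-$2$ automorphism of $H_1(V(2,2;-1))$ it preserves the torsion subgroup, and since $\operatorname{Aut}(\mathbb{Z}/2)$ is trivial we obtain $f_*(\ell)=\ell$. Pushing the second column of $M$ through the inclusion $\iota:H_1(\partial V)\hookrightarrow H_1(V(2,2;-1))$ and matching free and torsion parts,
\[
\iota(a\phi+\varepsilon_2\ell)\;=\;-2a\,\ell_1+\varepsilon_2\,t\;=\;t,
\]
immediately forces $a=0$. The orientation-reversing hypothesis descends to $\partial V$ since the outward normal is preserved at the boundary, so $\det M=-1$ gives $\varepsilon_1=-\varepsilon_2$ and hence $M=\pm\left[\begin{array}{cc}1 & 0\\ 0 & -1\end{array}\right]$. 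The only genuine obstacle is the homology computation, which demands consistent orientation conventions on the inner and outer longitudes; once $\ell=t$ is established, the triviality of $\operatorname{Aut}(\mathbb{Z}/2)$ makes the rest of the argument almost automatic and, pleasantly, bypasses any case analysis on whether $f$ preserves or interchanges the two critical fibers.
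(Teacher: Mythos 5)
Your argument is correct, but it is genuinely different from the one in the paper. The paper's proof stays in the trivially fibered complement $\hat{V}$: it invokes Propositions 5.1 and 5.2 to pin down the homological form of $f$ on each of the three inner boundary tori (forced by extendability over the $(2,1)$, $(2,1)$, $(1,-1)$ fillings), then uses the planar relation $\alpha=\alpha_1^{-1}\alpha_2^{-1}\alpha_3^{-1}$ and checks, in the two cases of fiber orientation preserved or reversed, that the Dehn-twist contributions $t+t-2t$ cancel, so $f_*(\alpha)=\alpha^{\mp1}$ has no fiber component. You instead work in the filled-in piece itself: the Seifert relations $\phi+2\ell_1=\phi+2\ell_2=0$, $\ell_3=\phi$ give $H_1(V(2,2;-1))\cong\mathbb{Z}\oplus\mathbb{Z}/2$ with $\phi$ twice a free generator and the outer longitude $\ell=\ell_2-\ell_1$ the $2$-torsion class (your computation checks out against the gluing maps in the construction), and then naturality of $\iota_*$ plus the fact that torsion is characteristic kills the off-diagonal entry $a$. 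What your route buys: it is self-contained (no appeal to the extension propositions or to half-lives-half-dies), it never needs $f$ to preserve the decomposition into $\hat{V}$ and the filling tori or to track whether the two order-$2$ fibers are exchanged, and it actually proves the stronger statement that \emph{any} fiber-preserving diffeomorphism of $V(2,2;-1)$ is homologically diagonal on the boundary, with orientation-reversal only pinning the determinant to $-1$. What the paper's route buys: it reuses machinery already established and records explicitly the fiber-orientation dichotomy that is used later in the construction of $\Psi$. Two small cautions: the map you call an inclusion $\iota:H_1(\partial V)\to H_1(V(2,2;-1))$ is not injective (for instance $2\ell\mapsto 0$), though your argument only uses naturality, not injectivity, so nothing breaks; and the identity $\ell=\ell_1+\ell_2+\ell_3$ holds only up to an overall sign depending on the boundary orientation convention, which, as you note, is harmless since both the torsion statement and the vanishing of the free part $-2a\ell_1$ are sign-insensitive.
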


\begin{proof}
    We apply the two previous propositions. Take $F$ to be a disc $D$ with three discs $D_1$, $D_2$, and $D_3$ removed. Then homologically respectively take representatives $\alpha$, $\alpha_1$,$\alpha_2$, and $\alpha_3$ for the boundaries of the discs and $t$ as the representative of the fiber.

    By applying the above propositions with the fillings according to $(1,2)$, $(1,2)$, and $(-1,1)$ then  either $f$ preserves the preserves the orientation of the fibers and:

    $$f_{*}(\alpha)=f_{*}(\alpha_1^{-1}\alpha_2^{-1}\alpha_2^{-1})=f_{*}(\alpha_1^{-1})f_{*}(\alpha_2^{-1})f_{*}(\alpha_3^{-1})=\alpha_1 t \alpha_2 t \alpha_3 t^{-2} =\alpha_1 \alpha_2 \alpha_3 = \alpha ^{-1}$$

    Or $f$ reverses the orientation of the fibers and:
    $$f_{*}(\alpha)=f_{*}(\alpha_1^{-1}\alpha_2^{-1}\alpha_2^{-1})=f_{*}(\alpha_1^{-1})f_{*}(\alpha_2^{-1})f_{*}(\alpha_3^{-1})=\alpha_1^{-1} t^{-1} \alpha_2^{-1} t^{-1} \alpha_3^{-1} t^{2} =\alpha_1^{-1} \alpha_2^{-1} \alpha_3^{-1} = \alpha$$

    In both cases, $f$ preserves the product structure up to homology.
    
\end{proof}

\section{Proof of Theorem}

We first establish that all $\Psi$ contains all orientation-reversing, fiber-preserving involutions that are the identity on the base space:

\begin{prop}
    Suppose $\psi\in Diff^{fp}_-(M)$ is an involution and the identity on the base space. Then $\psi \in \Psi$.
\end{prop}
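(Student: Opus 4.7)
The plan is to exhibit an explicit decomposition $M = \hat{M} \cup V_1 \cup \cdots \cup V_{n/2}$ on which $\psi$ realises the construction of Section 5. Since $\psi$ is the identity on the base orbifold $B$, every fiber (regular or critical) is $\psi$-invariant. I would first choose an arbitrary pairing of the $n$ order-$2$ cone points into $n/2$ disjoint pairs and, for each pair, fix a small embedded disk $D_j \subset B$ whose interior contains exactly that pair, with the disks pairwise disjoint. Setting $V_j = \pi^{-1}(D_j)$ and $\hat{M} = \pi^{-1}(B \setminus \bigcup_j \mathrm{int}\, D_j)$ then yields two $\psi$-invariant pieces. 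Each $V_j$ is Seifert fibered of type $V(2,2;b_j)$ with $\sum_j b_j = -n/2$; sliding twists between adjacent pieces (which changes each $b_j$ and simultaneously modifies the identification across $\partial V_j$) lets me normalise so that $b_j = -1$ for every $j$, identifying each $V_j$ with $V(2,2;-1)$.

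Next I would bring $\psi|_{\hat{M}}$ into standard form. In any fibering product structure $k_{\hat{M}}: S^1 \times \hat{F} \to \hat{M}$ with $\hat{F} = B \setminus \bigcup_j \mathrm{int}\, D_j$, fiber preservation combined with identity-on-base forces
$$(k_{\hat{M}}^{-1} \circ \psi \circ k_{\hat{M}})(u,x) = (\alpha(x)\, u^{-1},\, x)$$
for some smooth $\alpha: \hat{F} \to S^1$. If a continuous $\beta: \hat{F} \to S^1$ exists with $\beta^2 = \alpha$, then reparametrising by $(u,x) \mapsto (\beta(x) u, x)$ conjugates $\psi|_{\hat{M}}$ to $(u,x) \mapsto (u^{-1}, x)$, which is precisely $\psi_{\hat{M}}$ of Section 5.3. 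The matching conditions on each $\partial V_j$ dictated by Propositions 5.1 and 5.2 pin $\alpha|_{\partial \hat{F}} \equiv 1$, so the obstruction to producing $\beta$ lies in $H^1(\hat{F}; \mathbb{Z}/2)$, a group acted on by the reidentification moves arising from sliding twists between pieces and from re-pairing cone points. Choosing a suitable normalisation kills this class.

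Finally, Corollary 5.4 together with Propositions 5.1 and 5.2 will show that any fiber-preserving, orientation-reversing involution on $V(2,2;-1)$ which is identity on the base disk and restricts to the standard fiber-reversal on the boundary is fiber-preservingly conjugate to the model $\psi_{\hat{V}}$ of Section 5.4. Gluing the normalised $\psi|_{\hat{M}}$ to the $\psi|_{V_j}$'s then exhibits $\psi$ as an element of $\Psi$, as required.

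The step I expect to be the main obstacle is the vanishing of the class $[\alpha] \in H^1(\hat{F}; \mathbb{Z}/2)$: $\hat{F}$ has rich first cohomology, so a priori $\alpha$ can encode genuine twisting of $\psi$ along interior loops of $\hat{F}$. Showing that every such twisting is absorbed by the combined freedom in the pairing, the choice of disks $D_j$, and the sliding of $(1,-1)$ contributions between adjacent $V_j$ pieces is where the bulk of the technical work lies. The remainder reduces to bookkeeping that relies directly on the $V(2,2;-1)$-specific analysis already carried out in Section 5.
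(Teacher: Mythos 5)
Your outline follows the same broad strategy as the paper (split $M$ into $\hat M\cong S^1\times F$ and $V(2,2;-1)$ pieces, normalise $\psi|_{\hat M}$ to $(u,x)\mapsto(u^{-1},x)$, then use Propositions 5.1 and 5.2 to pin down $\psi$ on each piece), but there is a genuine gap, and it is precisely the step you yourself flag: the assertion that the twisting class $[\alpha]\in H^1(\hat F;\mathbb{Z}/2)$ can be killed is never argued --- ``choosing a suitable normalisation kills this class'' is the whole mathematical content of the proposition, since once $\alpha\equiv 1$ the rest is the bookkeeping you describe. Worse, the mechanism you propose for absorbing it is doubtful: re-pairing cone points, moving the disks $D_j$, and sliding $(1,-1)$ contributions only touch the part of the class carried by curves encircling cone points, while a component of $[\alpha]$ dual to a handle loop of the base disjoint from all cone points is unaffected by any of these moves. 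Removing such a component appears to require conjugating by a fiber-preserving diffeomorphism acting nontrivially on the base (e.g.\ pushing a cone point around that loop), which changes $\psi$ rather than merely re-choosing the decomposition and product structures, and so does not obviously prove membership of $\psi$ itself in $\Psi$ (and cannot be repaired by citing Proposition 7.2, whose proof depends on this proposition). A secondary inaccuracy: identity on the base plus fiber preservation does not literally force the rigid form $(u,x)\mapsto(\alpha(x)u^{-1},x)$; each fiberwise map is merely some orientation-reversing involution of $S^1$, and bringing the family into that rigid form already requires a parametrised fiberwise conjugation that you should carry out (or cite) before the cohomological discussion is meaningful.

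For comparison, the paper's own proof is much shorter and treats the step you isolate only briefly: it applies the Corollary to Propositions 5.1 and 5.2 to conclude that the boundary product structures are preserved up to homology, then asserts without loss of generality (appealing to uniqueness of product structures) that $\psi|_{\hat M}$ is exactly $(u,x)\mapsto(u^{-1},x)$, and spends its actual computation inside each $V(2,2;-1)$: evaluating $\psi$ homologically on the vertical torus over a loop $\sigma$ around a cone point via Proposition 5.1 to get $\left[\begin{smallmatrix}-1 & 1\\ 0 & 1\end{smallmatrix}\right]$, and deducing the model twisting $(u,t)\mapsto(u^{-1}e^{-2\pi i t},t)$ along the arc joining the paired cone points. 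So your proposal does not supply an argument the paper lacks; it relocates the difficulty to the cohomological normalisation on $\hat M$, correctly identifies it as the crux, and then stops, which leaves the proof incomplete.
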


\begin{proof}
    We arbitrarily separate $M$ into $\hat{M} \cong S^1 \times F $ and a collection of $V(2,2:-1)$s.

    Then given Corollary 4.3, the product structure $k_{\hat{M}}:S^1 \times F \times \hat{M}$ is preserved up to homology on the boundary. Consequently, without loss of generality, we can assume that it is preserved on the boundary and furthermore within (as product structures are unique) $\hat{M}$ as:

    $$(k_{\hat{M}}^{-1} \circ \psi|_{\hat{M}} \circ k_{\hat{M}})(u,x)=(u^{-1},x)$$

    This satisfies 7.3.

    Now take a $V(2,2,;-1)$ and consider the loop $\sigma$ shown in gray on the base space given by the image below:

\begin{figure}[ht]
\centering

\tikzset{every picture/.style={line width=0.75pt}} 

\begin{tikzpicture}[x=0.75pt,y=0.75pt,yscale=-1,xscale=1]

\draw   (100.2,38.8) .. controls (150.2,29.8) and (226,69) .. (287.2,19.8) .. controls (348.4,-29.4) and (384.2,31.8) .. (398.2,49.8) .. controls (412.2,67.8) and (382.2,167.8) .. (318.2,182.8) .. controls (254.2,197.8) and (244.59,122.83) .. (193.2,115.8) .. controls (141.81,108.77) and (101.27,137.32) .. (86.2,127.8) .. controls (71.13,118.28) and (39.73,97.66) .. (42.2,77.8) .. controls (44.67,57.94) and (50.2,47.8) .. (100.2,38.8) -- cycle ;
\draw    (277.2,62.44) .. controls (312.14,82.83) and (326.88,83.68) .. (360,62.44) ;
\draw    (289.79,68.67) .. controls (317.28,55.07) and (330.36,58.76) .. (348.57,68.67) ;
\draw    (68.2,72.44) .. controls (103.14,92.83) and (117.88,93.68) .. (151,72.44) ;
\draw    (80.79,78.67) .. controls (108.28,65.07) and (121.36,68.76) .. (139.57,78.67) ;
\draw   (248,123.9) .. controls (248,106.28) and (274.24,92) .. (306.6,92) .. controls (338.96,92) and (365.2,106.28) .. (365.2,123.9) .. controls (365.2,141.52) and (338.96,155.8) .. (306.6,155.8) .. controls (274.24,155.8) and (248,141.52) .. (248,123.9) -- cycle ;
\draw [line width=3]    (282.2,94.8) -- (283.2,152.8) ;
\draw  [fill={rgb, 255:red, 0; green, 0; blue, 0 }  ,fill opacity=1 ] (260.5,124.1) .. controls (260.5,122.11) and (262.11,120.5) .. (264.1,120.5) .. controls (266.09,120.5) and (267.7,122.11) .. (267.7,124.1) .. controls (267.7,126.09) and (266.09,127.7) .. (264.1,127.7) .. controls (262.11,127.7) and (260.5,126.09) .. (260.5,124.1) -- cycle ;
\draw  [fill={rgb, 255:red, 0; green, 0; blue, 0 }  ,fill opacity=1 ] (335,123.1) .. controls (335,121.11) and (336.61,119.5) .. (338.6,119.5) .. controls (340.59,119.5) and (342.2,121.11) .. (342.2,123.1) .. controls (342.2,125.09) and (340.59,126.7) .. (338.6,126.7) .. controls (336.61,126.7) and (335,125.09) .. (335,123.1) -- cycle ;
\draw  [line width=3]  (256.2,107.9) .. controls (259.7,104.4) and (281.7,94.7) .. (282.2,94.8) .. controls (282.7,94.9) and (282.7,150.7) .. (283.2,152.8) .. controls (283.7,154.9) and (268.7,150.9) .. (259.2,144.4) .. controls (249.7,137.9) and (247.72,130.14) .. (248,123.9) .. controls (248.28,117.66) and (252.7,111.4) .. (256.2,107.9) -- cycle ;

\draw (269.5,115.5) node [anchor=north west][inner sep=0.75pt]   [align=left] {{\fontfamily{pcr}\selectfont 2}};
\draw (345,114) node [anchor=north west][inner sep=0.75pt]   [align=left] {{\fontfamily{pcr}\selectfont 2}};
\draw (249,88.9) node [anchor=north west][inner sep=0.75pt]    {$\sigma $};

\end{tikzpicture}
\caption{Loop in base space}
\end{figure}
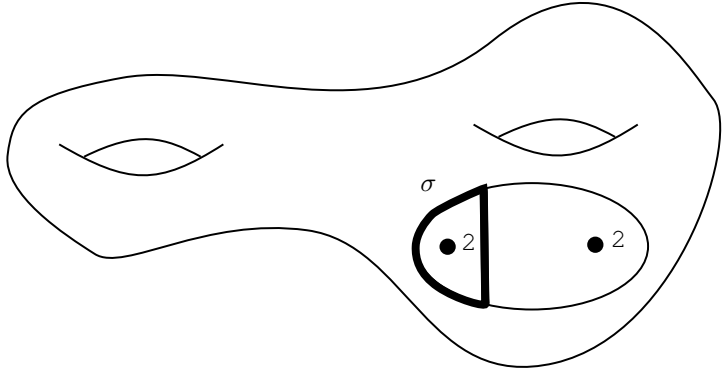

According to the product structure given for $V(2,2;-1)$ and applying Proposition 4.1 $\psi$ acts homologically on $S^1\times \sigma$ by:

$$\left[\begin{array}{cc}
-1 & 1\\
0 & 1
\end{array}\right]$$

Given that on the part of $\sigma$ on the boundary, $\psi $ acts as $(u,x)\mapsto (u^{-1},x)$, then it must be that on the remaining part parameterized as the unit interval (the orientation is determined by a positive orientation on the boundary of $V$ - in the case of the image, it would be downwards) $I$ we have $(u,t)\mapsto (u^{-1}e^{-2\pi it},t)$.

\end{proof}

\begin{prop}
$\Psi$ is a conjugacy class under conjugation by fiber-preserving diffeomorphisms.
\end{prop}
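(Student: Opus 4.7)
I will show both directions: (a) the set $\Psi$ is closed under conjugation by fiber-preserving diffeomorphisms, and (b) any two elements of $\Psi$ are conjugate via some $g \in \mathrm{Diff}^{fp}(M)$.

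\textbf{Closure (easy direction).} Let $\psi \in \Psi$ and $g \in \mathrm{Diff}^{fp}(M)$. Then $g\psi g^{-1}$ is fiber-preserving, is an involution, and has the same orientation character as $\psi$, namely orientation-reversing. Its induced map on the base orbifold is $\bar g \circ \mathrm{id}_B \circ \bar g^{-1} = \mathrm{id}_B$, since $\psi$ is the identity on $B$ by the definition of $\Psi$. Hence $g\psi g^{-1}$ is a fiber-preserving, orientation-reversing involution that is the identity on the base, so by Proposition 7.1 it lies in $\Psi$.

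\textbf{Transitivity.} Let $\psi_1,\psi_2\in\Psi$, constructed from decompositions $M = \hat M_i \cup \bigcup_j V_{i,j}$ (pairing the $n$ order-$2$ critical fibers into $n/2$ copies of $V(2,2;-1)$) and product structures $k_{\hat M_i}$, $k_{\hat V_{i,j}}$ as in Section 5. I plan to construct $g\in \mathrm{Diff}^{fp}(M)$ with $g\psi_1 g^{-1}=\psi_2$ in three stages. First, find a fiber-preserving diffeomorphism $g_1$ of $M$ that carries the decomposition used by $\psi_1$ onto that used by $\psi_2$; such a $g_1$ exists because any two fibered regular neighborhoods of the same collection of critical fibers are isotopic through fiber-preserving embeddings, and any two pairings of the $n$ critical fibers into $n/2$ $V(2,2;-1)$-pieces differ by a permutation which can be realized by a fiber-preserving diffeomorphism (lifting an appropriate mapping class of the base orbifold that permutes the cone points). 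After conjugating $\psi_1$ by $g_1$, we may assume the two involutions use the same decomposition. Second, the product structures $k_{\hat M}$ and $k_{\hat V_j}$ are unique up to fiber-preserving isotopy on $\hat M \cong S^1\times F$ and on each $V(2,2;-1)$; so there exists $g_2\in\mathrm{Diff}^{fp}(M)$ that aligns them with those used by $\psi_2$, and we may further conjugate to assume the product structures are common. Third, with decomposition and product structures matched, the construction of Section 5.3–5.4 is deterministic, so the conjugated $\psi_1$ and $\psi_2$ agree on $\hat M$ and on each $V(2,2;-1)$, and hence globally.

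\textbf{The main obstacle.} The serious step is the first one: realizing a change of decomposition by a \emph{fiber-preserving} diffeomorphism of the ambient $M$. Permuting the critical fibers amounts to lifting a mapping class of the base orbifold $B$ that permutes cone points, and this lift must preserve the Seifert invariants $(2,1)$ at each critical fiber. I expect to handle this by exhibiting, for any adjacent pair of cone points, an explicit fiber-preserving diffeomorphism supported in a fibered disk neighborhood containing the two cones — essentially a fibered half-twist — and then combining such generators to realize any permutation. A secondary subtlety is verifying that, after matching product structures on $\hat M$ and on each $V(2,2;-1)$ separately, the boundary identifications still match; this should follow from Corollary 5.3 (the product structure on $\partial V(2,2;-1)$ is preserved up to homology), combined with the observation that the residual boundary ambiguity can be absorbed into a fiber-preserving isotopy of the gluing before extending the conjugation inside.
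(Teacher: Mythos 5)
Your proposal is correct and takes essentially the same route as the paper: the closure direction is exactly the paper's appeal to Proposition 7.1, and your three-stage construction of the conjugating map (match the decompositions, match the product structures, then observe the two constructions coincide) is just a more explicit version of the paper's single step of producing a fiber-preserving diffeomorphism $\hat{g}:\hat{M}_1\rightarrow\hat{M}_2$ that is a product map in the given coordinates and extending it across the $V(2,2;-1)$ pieces, where it is homologically the identity. The re-pairing/permutation issue you flag as the main obstacle is subsumed in the paper's terse assertion that such a $\hat{g}$ ``certainly exists,'' so your treatment is, if anything, more careful at that point.
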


\begin{proof}
    Given $\psi\in\Psi$, we note that for a fiber-preserving diffeomorphism $g$, $g^{-1}\circ\psi \circ g$ is fiber-preserving, orientation-reversing, an involution, and the identity on the base space. Hence by Proposition 7.1, $g^{-1}\circ\psi \circ g\in\Psi$.

    This establishes that $[\psi]\subset\Psi$.

    To fully establish $[\psi]=\Psi$, we need only show that given $\psi_1,\psi_2 \in \Psi$, there exists a fiber-preserving diffeomorphism $g$ such that $\psi_1=g^{-1}\circ \psi_2 \circ g$.

    Given that $\psi_1$ and $\psi_2$ are equal on any given $V(2,2;-1)$, we construct $\hat{M}_1$ by removing all $V(2,2;-1)$s according to $\psi_1$ and similarly $\hat{M}_2$ by removing all $V(2,2;-1)$s according to $\psi_2$. We then have product structures $k_{\hat{M}_1}:S^1\times F \rightarrow \hat{M}_1$ and $k_{\hat{M}_2}:S^1\times F \rightarrow \hat{M}_2$

    There certainly exists a fiber-preserving (and orientation-preserving) diffeomorphism $\hat{g}:\hat{M}_1\rightarrow\hat{M}_2$ which is such that $k_{\hat{M}_2}^{-1}\circ \hat{g} \circ k_{\hat{M}_1}$ is a product map. This can then be extended across into the $V(2,2;-1)$s as it is homologically the identity according to the product structures induced by $k_{\hat{M}_1}$ and $k_{\hat{M}_2}$.
\end{proof}

We now establish the main result:

\begin{thm} \label{thm:mainresult}
           Let $M$ be a closed, compact, and orientable Seifert $3$-manifold that fibers over an orientable base space and is not $S^1 \times S$ for some surface $S$. Then all fiber-preserving and orientation-reversing involutions can be expressed as $\psi\circ g$ for $g$ a fiber-preserving and orientation-preserving involution $g$ and $\psi\in\Psi$.

\end{thm}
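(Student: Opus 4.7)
My plan is to construct a $\psi \in \Psi$ that commutes with $f$, so that $g := \psi \circ f$ is an orientation-preserving, fiber-preserving involution and the factorization $f = \psi \circ g$ follows from $\psi^2 = \mathrm{id}$. Since $\Psi$ is a single conjugacy class under fiber-preserving diffeomorphisms (Proposition 7.2) and the conclusion of the theorem is invariant under conjugation of $f$ by any fiber-preserving diffeomorphism, I may replace $f$ by any such conjugate without loss of generality.

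First, I would choose an $f$-invariant decomposition $M = \hat{M} \cup \bigsqcup_{i=1}^{n/2} V(2,2;-1)_i$. The induced involution $\bar{f}$ on the base permutes the $n$ cone points; since $n$ is even and the number of $\bar{f}$-fixed cone points has the same parity as $n$, the cone points may be partitioned into $n/2$ pairs, each either setwise fixed or swapped by $\bar{f}$. After an ambient fiber-preserving isotopy absorbed into conjugation, $f$ preserves this decomposition and permutes the pieces $V(2,2;-1)_i$.

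Next, on $\hat{M} \cong S^1 \times F$, applying Corollary 4.3 at each boundary torus and invoking the essential uniqueness of fibering product structures, I would further conjugate $f$ by a fiber-preserving diffeomorphism to put $f|_{\hat{M}}$ in the product form
\[
(k_{\hat{M}}^{-1} \circ f \circ k_{\hat{M}})(u, x) = (u^{\varepsilon}, \bar{f}(x)),
\]
where $\varepsilon = -1$ when $\bar{f}$ is orientation-preserving on $F$ and $\varepsilon = +1$ when it is orientation-reversing. Defining $\psi|_{\hat{M}}(u, x) = (u^{-1}, x)$ in the style of Section 5.3, a direct computation shows that $\psi|_{\hat{M}}$ and $f|_{\hat{M}}$ commute and that $\psi \circ f|_{\hat{M}}$ is an orientation-preserving involution.

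Finally, I would extend $\psi$ across each $V(2,2;-1)_i$. On an $f$-swapped pair $(V_i, V_j)$, set $\psi|_{V_j} := f \circ \psi|_{V_i} \circ f^{-1}$, which forces commutation with $f$ on the pair by construction. On each $f$-fixed $V_i$, use the Section 5.4 construction together with a fiber-preserving reparameterization of $V_i$ so that $\psi|_{V_i}$ commutes with $f|_{V_i}$; Corollary 4.3 constrains $f|_{V_i}$ to a short list of normal forms, each compatible with a choice from $\Psi|_{V_i}$. With $\psi \in \Psi$ commuting with $f$ globally, $g := \psi \circ f$ satisfies $g^2 = \psi f \psi f = \psi^2 f^2 = \mathrm{id}$ and is an orientation-preserving, fiber-preserving involution, yielding $f = \psi \circ g$. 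The main obstacle is precisely the commutation check on the $f$-fixed copies of $V(2,2;-1)$, where $f$ may swap the two interior cone points or otherwise act non-trivially on the local base; this reduces via Propositions 5.1 and 5.2 and Corollary 4.3 to a finite case analysis of the boundary homological actions of $f|_{V_i}$, since each such action pins down $f|_{V_i}$ up to conjugation.
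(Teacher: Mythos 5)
Your route---construct a $\psi\in\Psi$ that commutes with $f$ and then set $g=\psi\circ f$---is genuinely different from the paper's, and the two steps you treat as routine are exactly where it breaks down. First, the normalization of $f|_{\hat{M}}$: the corollary you invoke (preservation of the boundary product structure up to homology) only controls the induced maps on $H_1$ of the boundary tori; it does not let you conjugate an arbitrary fiber-preserving, orientation-reversing involution of $S^1\times F$ into the strict product form $(u,x)\mapsto(u^{\varepsilon},\bar{f}(x))$. That normalization is an orientation-reversing analogue of the author's earlier ``extended product action'' theorem and is of essentially the same depth as the theorem you are proving; nothing quoted in this paper supplies it. And your commutation check genuinely needs it: fiber-preservation alone only gives $(k_{\hat{M}}^{-1}\circ f\circ k_{\hat{M}})(u,x)=(f_1(u,x),f_2(x))$, and commutation with $(u,x)\mapsto(u^{-1},x)$ requires $f_1(u^{-1},x)=f_1(u,x)^{-1}$, which is not automatic. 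Second, on an $f$-invariant copy of $V(2,2;-1)$, knowing $f|_{V_i}$ up to conjugacy does not produce a model $\psi|_{V_i}$ that simultaneously commutes with $f|_{V_i}$ and restricts on $\partial V_i$ to your $\psi|_{\hat{M}}$: the conjugating diffeomorphism need not be the identity on the boundary, so the conjugated model need not glue. Commutation with elements of $\Psi$ is not a formality---the corollary the paper proves immediately after the theorem (no fiber-preserving, orientation-reversing involution fixes a fiber while acting orientation-preservingly on the base) is established precisely by exhibiting a failure of such a commutation. Since the existence of a commuting $\psi$ is logically equivalent to the theorem (if $f=\psi\circ g$ with all three maps involutions, then $\psi$ commutes with $f$), your plan is not circular, but the commuting $\psi$ is exactly the hard content and your sketch leaves it unconstructed.

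The paper sidesteps both issues by building $g$ first rather than $\psi$: it keeps $f|_{\hat{M}}$ only in the weak form $(f_1(u,x),f_2(x))$, defines $g|_{\hat{M}}(u,x)=(u^{\pm 1},f_2(x))$, verifies directly from $f^2=\mathrm{id}$ that $f|_{\hat{M}}\circ g|_{\hat{M}}$ is an involution that is the identity on the base, extends $g$ across the $V(2,2;-1)$ pieces via Propositions 5.1 and 5.2, and then applies Proposition 7.1 to conclude $f\circ g\in\Psi$, whence $f=\psi\circ g$. No product normalization of $f$ and no equivariant analysis on the filled pieces is needed. (One point where you are more careful than the paper: your pairing of the cone points to obtain an $f$-compatible decomposition addresses the implicit assumption that $f$ preserves $\hat{M}$, which the paper's ``arbitrary'' splitting glosses over.) As written, though, your argument has genuine gaps at the product-form normalization on $\hat{M}$ and at the commutation over invariant $V(2,2;-1)$ pieces, and filling them would amount to reproving the theorem by a harder route.
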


\begin{proof}
    We take our $f$ an involution which preserves fibers and reverses the orientation. We arbitrarily split into $\hat{M}$ and a collection of $V(2,2;-1)$s. By choosing a product structure $k_{\hat{M}}:S^1 \times F \rightarrow \hat{M}$ we can express $f|_{\hat{M}}$ by:

    $$(k_{\hat{M}}^{-1} \circ f|_{\hat{M}} \circ k_{\hat{M}})(u,x)=(f_1(u,x),f_2(x))$$

    We then define our $g$ by first defining it on $\hat{M}$ by:

    $$(k_{\hat{M}}^{-1} \circ g|_{\hat{M}} \circ k_{\hat{M}})(u,x)=(u^{\epsilon},f_2(x))$$

    Here $\epsilon =1$ if the fiber orientation is preserved and $\epsilon =-1$ if it is reversed.

    First note $f|_{\hat{M}} \circ g|_{\hat{M}}$ is the identity on the base space, we show that it is an involution.

    Firstly, 
    $$(k_{\hat{M}}^{-1} \circ f|_{\hat{M}} \circ g|_{\hat{M}} \circ k_{\hat{M}})(u,x)=(f_1(u^{\epsilon},f_2(x)),x)$$
    and then:
    \begin{equation}(k_{\hat{M}}^{-1} \circ (f|_{\hat{M}} \circ g|_{\hat{M}})^2 \circ k_{\hat{M}})(u,x)=(f_1(f_1(u^{\epsilon},f_2(x)),x)^{\epsilon},x)
    \label{1}
    \end{equation}

    But we have that:

    $$(u^{\epsilon},f_2(x))=(k_{\hat{M}}^{-1} \circ f|_{\hat{M}}^2 \circ k_{\hat{M}})(u^{\epsilon},f_2(x))=(f_1(f_1(u^{\epsilon},f_2(x)),x),f_2(x))$$

    So then:

    $$f_1(f_1(u^{\epsilon},f_2(x)),x)^{\epsilon}=(u^{\epsilon})^{\epsilon}=u$$

    So that in equation (1) we have:
    \begin{equation}(k_{\hat{M}}^{-1} \circ (f|_{\hat{M}} \circ g|_{\hat{M}})^2 \circ k_{\hat{M}})(u,x)=(u,x)
    \end{equation}

    That is, $f|_{\hat{M}} \circ g|_{\hat{M}}$ is an involution.

    We can certainly extend $g|_{\hat{M}}$  over to the $V(2,2:-1)$s by Propositions 4.1 and 4.2 as an involution. And so $f|_{\hat{M}} \circ g|_{\hat{M}}$ also extends as an involution that is the identity on the base space.

    By Proposition 9.1, $f \circ g = \psi$ for some $\psi \in \Psi$ and $f=\psi\circ g$.
    
\end{proof}

We here state the following Lemma:

\begin{lem}
    Let $g\in Diff^{fp}(M)$ be an involution. Then for a given $\psi\in \Psi$, $g\circ \psi$ is an involution if and only if $g$ and $\psi$ commute.
\end{lem}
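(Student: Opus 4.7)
The plan is to recognize this as a direct group-theoretic identity using only that $g$ and $\psi$ are each involutions; no geometric input about Seifert fiberings or the specific structure of the class $\Psi$ is actually needed beyond $\psi^{2}=\mathrm{id}$. I would carry out two short calculations, one in each direction, with essentially no preliminaries.

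For the forward direction, I would assume $g\circ\psi$ is an involution, so that $(g\circ\psi)^{2} = g\circ\psi\circ g\circ\psi = \mathrm{id}$. Composing on the right by $\psi$ and on the left by $g$, and using $g^{-1}=g$ and $\psi^{-1}=\psi$, this rearranges directly to $\psi\circ g = g\circ\psi$, which is precisely the commuting condition. For the converse, I would assume $g$ and $\psi$ commute and compute
$$(g\circ\psi)^{2} \;=\; g\circ\psi\circ g\circ\psi \;=\; g\circ g\circ \psi\circ\psi \;=\; \mathrm{id},$$
so that $g\circ\psi$ has order dividing $2$.

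The only piece of bookkeeping is the degenerate case: to conclude that $g\circ\psi$ is a genuine involution in the paper's sense (an element of order exactly $2$, not the identity), I would note that $g\circ\psi=\mathrm{id}$ would force $g=\psi^{-1}=\psi$, and either exclude this trivial case at the outset or observe that it makes the statement vacuous. The hard part is essentially nonexistent; the entire content of the lemma is the two short manipulations above, together with this triviality check.
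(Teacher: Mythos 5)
Your argument is correct and is essentially the same as the paper's own one-line proof, which notes that $(g\circ\psi)^2=\mathrm{id}$ if and only if $g\circ\psi=(g\circ\psi)^{-1}=\psi^{-1}\circ g^{-1}=\psi\circ g$, using only that $g$ and $\psi$ are involutions. Your extra remark about the degenerate case $g\circ\psi=\mathrm{id}$ is harmless bookkeeping that the paper does not bother with.
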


\begin{proof}
    This follows from:
    $$(g \circ \psi)^2=id \iff g\circ \psi=(g\circ\psi)^{-1}=\psi^{-1} \circ g^{-1}=\psi \circ g$$
\end{proof}

We use this lemma and the main Theorem 9.2 to prove:

\begin{cor}
    There is no fiber-preserving, and orientation-reversing involution that leaves a fiber invariant and is orientation-preserving on the base space.
\end{cor}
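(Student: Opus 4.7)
The plan is to argue by contradiction, combining Theorem \ref{thm:mainresult} with the preceding lemma. Suppose such an $f$ exists. By Theorem \ref{thm:mainresult}, write $f=\psi\circ g$ with $\psi\in\Psi$ and $g\in\mathrm{Diff}_{+}^{fp}(M)$ an involution; since $f$ itself is an involution, the preceding lemma forces $\psi$ and $g$ to commute. Because every element of $\Psi$ acts as the identity on the base, $\bar g = \bar f$, so $\bar g$ is orientation-preserving by hypothesis; together with $g$ being orientation-preserving on $M$, this forces $g$ to preserve fiber orientations. Moreover, $\bar\psi = \mathrm{id}$ implies $\psi$ preserves every fiber setwise, so $\psi(\alpha)=\alpha$, and combined with $f(\alpha)=\alpha$ this gives $g(\alpha)=\alpha$.

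I would next analyze the joint action of $\psi$, $g$, and $f=\psi g$ in a fibered tubular neighborhood of $\alpha$. On $\alpha \cong S^1$, $\psi|_\alpha$ is an orientation-reversing involution with two fixed points, $g|_\alpha$ is an orientation-preserving involution of $S^1$ (so either the identity or the rotation by $\pi$), and $f|_\alpha = \psi|_\alpha \circ g|_\alpha$ is an orientation-reversing involution with two fixed points. Using Theorem 1.1, I express $g$ in a neighborhood of $\alpha$ as an extended product action $\rho \times \sigma$ in a suitable product decomposition $\hat M \cong S^1 \times F$ with $\sigma = \bar g$; the commutativity with $\psi$, which on this decomposition may be placed in the form $(u,x)\mapsto(u^{-1},x)$ by Proposition 7.1, then restricts $\rho \in \{\mathrm{id},-\mathrm{id}\}$. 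In each case $f$ acquires an explicit local form $(u,x)\mapsto(\rho(u)^{-1},\sigma(x))$ whose boundary behavior I can compute exactly.

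The main obstacle is converting these local normal forms into a global incompatibility. The plan is to compute the homological action of $f$ on the boundary torus of each $V(2,2;-1)$ piece preserved by $\bar f$, compare with the diagonal forms dictated by the Corollary to Propositions 5.1 and 5.2, and then use the chain-rule identity $df_p = d\psi_{g(p)}\circ dg_p$ at the two fixed points of $f|_\alpha$. Since $\bar f$ is orientation-preserving at the fixed point $\bar\alpha$, the local base action is either the identity (in which case $\bar f = \mathrm{id}$ globally by smoothness, forcing $f\in\Psi$ and contradicting the assumption that $f$ non-trivially leaves a specific fiber invariant) or rotation by $\pi$, which in combination with $\psi$'s fixed form and the allowed values of $\rho$ should produce a sign incompatibility either on the boundary of an invariant $V(2,2;-1)$ piece or in the derivative relation at a fixed point of $f$ on $\alpha$. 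The hardest step will be executing this case analysis uniformly over all possible positions of $\alpha$ (inside $\hat M$ versus inside a $V(2,2;-1)$ piece, regular versus critical) and over both sign choices for $\rho$, together with the sub-case in which $p\in\mathrm{Fix}(f|_\alpha)$ is not fixed by $\psi$ or $g$ individually (so $\psi$ and $g$ swap the two $f$-fixed points of $\alpha$).
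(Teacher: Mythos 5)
Your opening paragraph reproduces the paper's reduction exactly: assume such an $f$ exists, factor $f=\psi\circ g$ via Theorem \ref{thm:mainresult}, and invoke the lemma that $f$ being an involution forces $\psi$ and $g$ to commute. After that the routes diverge, and yours does not arrive at a contradiction. The paper's argument is short and global rather than local: after conjugating so that $g$ carries one $V(2,2;-1)$ disc onto a second disc whose arc $I_2$ lies outside the first, it compares the two compositions over the twist arc $I_1$, computing $(g\circ\psi)$ in product coordinates as $(u,t)\mapsto\bigl(u^{-\epsilon}e^{-2\pi\epsilon i t},g_2(t)\bigr)$ but $(\psi\circ g)$ as $(u,t)\mapsto\bigl(u^{-\epsilon},g_2(t)\bigr)$; the Dehn-twist factor of $\psi$ appears on one side only, because $I_1$ lies in the twist annulus of a $V(2,2;-1)$ piece while its $g$-image $I_2$ does not. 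The non-commutation, hence the contradiction, comes from the asymmetry of $\psi$'s twist regions relative to $g$, not from derivative data along the invariant fiber, which is where you propose to look.

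The genuine gap is that your plan stalls precisely at the step you flag: the ``sign incompatibility'' is never exhibited, and one of your branches cannot be closed from the stated hypotheses at all. If $d\bar{f}=\mathrm{id}$ at the fixed point of $\bar{f}$, then $\bar{f}=\mathrm{id}$ and $f\in\Psi$ by the proposition that identity-on-base orientation-reversing involutions lie in $\Psi$; you dismiss this by appealing to ``the assumption that $f$ non-trivially leaves a specific fiber invariant,'' but no such assumption appears in the corollary. Elements of $\Psi$ are themselves fiber-preserving, orientation-reversing involutions that leave every fiber invariant and are the identity (hence orientation-preserving) on the base, so in that branch the hypotheses give you nothing to contradict; any successful argument must import some further input (the paper does so through its arrangement that $g$ moves the chosen $V(2,2;-1)$ piece off itself, which is exactly what fails when $g$ preserves the pieces). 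A secondary inaccuracy: $\psi|_\alpha$ is not always an orientation-reversing involution of the circle with two fixed points --- the extension of $\psi$ into a $(2,1)$-filled torus fixes the core critical fiber pointwise --- so the ``regular versus critical'' case split you defer is substantive, not routine. As written, the proposal is a program whose central case analysis is incomplete and, in the $\bar{f}=\mathrm{id}$ branch, not completable by the means you describe.
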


\begin{proof}
    Suppose that such an $f$ exists. Then by Theorem 9.2, $f=g\circ \psi$ for some fiber-preserving and orientation-preserving involution $g$ and $\psi \in \Psi$.

    We show that $g$ and $\psi$ cannot commute and so by Lemma 9.3 we contradict the existence of $f$.

    We first note that on the base space, we can assume that we have the following image. Here the first disc is mapped under $g$ to the second disc so that $I_2$ is outside the first disc. If not, we can conjugate $f$ by an appropriate fiber-preserving map to make it so:

    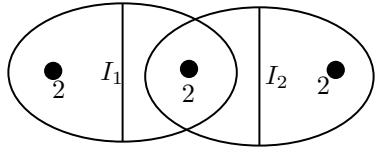
\begin{figure}[ht]
\centering

\tikzset{every picture/.style={line width=0.75pt}} 

\begin{tikzpicture}[x=0.75pt,y=0.75pt,yscale=-1,xscale=1]

\draw   (140,96.9) .. controls (140,77.63) and (165.79,62) .. (197.6,62) .. controls (229.41,62) and (255.2,77.63) .. (255.2,96.9) .. controls (255.2,116.17) and (229.41,131.8) .. (197.6,131.8) .. controls (165.79,131.8) and (140,116.17) .. (140,96.9) -- cycle ;
\draw   (209,98.9) .. controls (209,79.63) and (234.79,64) .. (266.6,64) .. controls (298.41,64) and (324.2,79.63) .. (324.2,98.9) .. controls (324.2,118.17) and (298.41,133.8) .. (266.6,133.8) .. controls (234.79,133.8) and (209,118.17) .. (209,98.9) -- cycle ;
\draw    (197.6,62) -- (197.6,131.8) ;
\draw    (266.6,64) -- (266.6,133.8) ;
\draw  [fill={rgb, 255:red, 0; green, 0; blue, 0 }  ,fill opacity=1 ] (227,95.1) .. controls (227,92.84) and (228.84,91) .. (231.1,91) .. controls (233.36,91) and (235.2,92.84) .. (235.2,95.1) .. controls (235.2,97.36) and (233.36,99.2) .. (231.1,99.2) .. controls (228.84,99.2) and (227,97.36) .. (227,95.1) -- cycle ;
\draw  [fill={rgb, 255:red, 0; green, 0; blue, 0 }  ,fill opacity=1 ] (158.5,96.1) .. controls (158.5,93.84) and (160.34,92) .. (162.6,92) .. controls (164.86,92) and (166.7,93.84) .. (166.7,96.1) .. controls (166.7,98.36) and (164.86,100.2) .. (162.6,100.2) .. controls (160.34,100.2) and (158.5,98.36) .. (158.5,96.1) -- cycle ;
\draw  [fill={rgb, 255:red, 0; green, 0; blue, 0 }  ,fill opacity=1 ] (301,95.6) .. controls (301,93.34) and (302.84,91.5) .. (305.1,91.5) .. controls (307.36,91.5) and (309.2,93.34) .. (309.2,95.6) .. controls (309.2,97.86) and (307.36,99.7) .. (305.1,99.7) .. controls (302.84,99.7) and (301,97.86) .. (301,95.6) -- cycle ;

\draw (160.5,99.5) node [anchor=north west][inner sep=0.75pt]    {$2$};
\draw (226,101.5) node [anchor=north west][inner sep=0.75pt]    {$2$};
\draw (294,97.5) node [anchor=north west][inner sep=0.75pt]    {$2$};
\draw (185,90) node [anchor=north west][inner sep=0.75pt]    {$I_{1}$};
\draw (268,92) node [anchor=north west][inner sep=0.75pt]    {$I_{2}$};

\end{tikzpicture}
\caption{Base space}
\end{figure}

    Secondly, by \cite{peet2019finite} and Theorem 1.1, $g$ is an extended product involution - it fixes a fiber, so the orbit number of that fiber is 1. We therefore can assume that it acts as a product on $I_1$ and $I_2$.

    Given this, we establish product structures on $k_1:S^1\times I \rightarrow S^1\times I_1$ and $k_2:S^1\times I \rightarrow S^1\times I_2$.

    So then $(k_2^{-1} \circ g|_{I_1} \circ k_1)(u,t)=(u^\epsilon,g_2(t))$  and:

$$(k_2^{-1}\circ (g\circ \psi)|_{I_1}\circ k_1)(u,t)=(k_2^{-1}\circ g|_{I_1} \circ k_1^{-1}) \circ (k_1 \circ \psi|_{I_1}\circ k_1)(u,t)=(k_2^{-1}\circ g|_{I_1} \circ k_1^{-1})(u^{-1}e^{-2\pi it},t)=(u^{-\epsilon}e^{-2\pi \epsilon it},g_2(t))$$

On the other hand:

$$(k_2^{-1}\circ (\psi\circ g)|_{I_1}\circ k_1)(u,t)=(k_2^{-1}\circ \psi|_{I_2} \circ k_2^{-1}) \circ (k_2 \circ g|_{I_1}\circ k_1)(u,t)=(k_2^{-1}\circ \psi|_{I_2} \circ k_2^{-1})(u^\epsilon,g_2(t))=(u^{-\epsilon},g_2(t))$$

Clearly, these are not equal.

\end{proof}

Note that given no free group action is admissible, there is a fiber fixed and the obstruction condition is always satisfied.

\section{Nonorientable base space}

We once again quote from \cite{peet2019finitenon}:

\begin{thm}\label{thm:nonorientable}
 Let $M$ be an orientable Seifert fibered manifold that fibers over an orbifold $B$ that has non-orientable underlying space and $p:\tilde{M}\rightarrow M$ be the orientable base space double cover. Let the covering translation be $\tau:\tilde{M}\rightarrow\tilde{M}$. Then there exists an isomorphism between $Diff_{+}^{fp}(M)$ and $Cent_{+}^{fop}(\tau)$.
\end{thm}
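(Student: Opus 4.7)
The plan is to construct the isomorphism explicitly by descent and lifting through the double cover $p: \tilde{M} \to M$. The structural observation that makes this a genuine bijection, rather than a 2-to-1 correspondence, is that the covering translation $\tau$ must reverse fiber orientation: $\tau$ preserves the orientation of $\tilde{M}$ (since the quotient $M$ is orientable by hypothesis) but reverses the orientation of the base orbifold, and so --- viewed locally as a product of base and fiber orientations --- it must flip the fiber direction. Hence $\tau$ is fiber-orientation-reversing and does not itself lie in the group of fiber-orientation-preserving diffeomorphisms appearing in $Cent_+^{fop}(\tau)$, so of the two possible lifts of any $f: M \to M$ there is a canonical one.

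For the descent direction, given $\tilde{f}$ in the centralizer of $\tau$, I would define $\Phi(\tilde{f}) = f$ by $f(p(\tilde{x})) = p(\tilde{f}(\tilde{x}))$. The commutation relation $\tilde{f} \circ \tau = \tau \circ \tilde{f}$ is precisely what guarantees well-definedness on each two-point fiber $\{\tilde{x}, \tau(\tilde{x})\}$ of $p$, and smoothness, fiber-preservation, and orientation-preservation on $M$ all transfer from $\tilde{M}$ because $p$ is a fiber-preserving local diffeomorphism. That $\Phi$ is a group homomorphism follows immediately from the defining identity. For the inverse direction, given $f \in Diff_+^{fp}(M)$, standard covering space theory produces a lift $\tilde{f}$ to $\tilde{M}$: any fiber-preserving map of $M$ respects the orientation character of the base, hence preserves the characteristic subgroup $p_*\pi_1(\tilde{M}) \subset \pi_1(M)$. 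Of the two lifts $\tilde{f}$ and $\tau \circ \tilde{f}$, exactly one is fiber-orientation-preserving by the observation above, yielding the canonical choice. Either lift automatically commutes with $\tau$, since $\tilde{f}$ permutes the two-element $\tau$-orbits $\{\tilde{x}, \tau(\tilde{x})\}$ and the nontrivial involution of a two-element fiber must coincide with $\tau$.

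The main obstacle will be the orientation bookkeeping: carefully establishing that $\tau$ is fiber-orientation-reversing in the non-orientable base case, and verifying the compatibility of the various orientation and fiber-orientation classes under descent and lifting so that $+$ on $M$ corresponds to $+$ on $\tilde{M}$ together with the $fop$ condition on fibers. Once these verifications are complete, uniqueness of the canonical lift guarantees that $\Phi$ and its lifting inverse are mutually inverse group homomorphisms, completing the isomorphism. The extension to $Diff_-^{fp}(M) \cong Cent_-^{for}(\tau)$ would proceed identically, except that composition with $\tau$ now trades a fiber-orientation-reversing lift for a fiber-orientation-preserving one, again giving a canonical selection.
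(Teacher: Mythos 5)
There is nothing in this paper to compare against: Theorem \ref{thm:nonorientable} is quoted verbatim from the author's earlier work (\cite{peet2019finitenon}) and no proof is given here, so your argument can only be judged on its own terms. On those terms it is essentially correct, and it is the natural lifting/descent argument one would expect the cited source to use. Your key structural observations are all right: $\tau$ is orientation-preserving on $\tilde{M}$ (else the quotient $M$ would be non-orientable) but covers the orientation-reversing deck map of the base, hence reverses fiber orientation; any fiber-preserving $f$ on $M$ preserves the orientation character of the underlying base surface and therefore preserves $p_*\pi_1(\tilde{M})$, so lifts exist; every lift automatically commutes with $\tau$ because $\tilde{f}\tau\tilde{f}^{-1}$ is a nontrivial deck transformation of a double cover; and since the two lifts differ by the fiber-orientation-reversing $\tau$, exactly one lift is fiber-orientation-preserving, which both selects the canonical lift and shows the descent map has trivial kernel. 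One point you should make explicit in a full write-up: the ``fop/for'' dichotomy on $\tilde{M}$ requires that the fibers of $\tilde{M}$ admit a coherent orientation which a fiber-preserving diffeomorphism either preserves or reverses globally; this holds because $\tilde{M}$ is orientable and fibers over an orientable base (and it is precisely what fails on $M$ itself, which is why no fiber-orientation condition appears on the $Diff_{+}^{fp}(M)$ side). Finally, note that your closing remark about the orientation-reversing case, while workable, differs from what this paper actually does: its Corollary 10.2 obtains $Diff_{-}^{fp}(M)\cong Cent_{-}^{for}(\tau)$ by composing with an element $\psi\in\Psi$ and invoking the theorem, rather than by repeating the lifting argument, and since these orientation-reversing sets are not subgroups the ``isomorphism'' there should be read as a compatible bijection.
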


We give the following corollary:

\begin{cor}
 Let $M$ be an orientable Seifert fibered manifold that fibers over an orbifold $B$ that has non-orientable underlying space and $p:\tilde{M}\rightarrow M$ be the orientable base space double cover. Let the covering translation be $\tau:\tilde{M}\rightarrow\tilde{M}$. Then there exists an isomorphism between $Diff_{-}^{fp}(M)$ and $Cent_{-}^{for}(\tau)$.
\end{cor}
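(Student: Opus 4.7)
The plan is to repeat the argument of Theorem \ref{thm:nonorientable} with orientation-reversing maps, since the machinery of lifting and descending is insensitive to orientation and only the selection of a canonical lift differs. First I would verify that every $f\in Diff^{fp}_{-}(M)$ lifts to $\tilde M$: the subgroup $p_{*}(\pi_{1}(\tilde M))\subset \pi_{1}(M)$ is the kernel of the base-orientation character of the base orbifold, hence is characteristic and therefore preserved by $f_{*}$. Covering-space theory then produces exactly two lifts $\tilde f$ and $\tau\circ\tilde f$, each commuting with $\tau$ by the same argument already used in Theorem \ref{thm:nonorientable}.

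Next I would pin down how the two lifts of $f$ differ, in order to pick out the one that sits in $Cent^{for}_{-}(\tau)$. Because $M=\tilde M/\tau$ and both are orientable, $\tau$ must act on $\tilde M$ in an orientation-preserving fashion; since $\tau$ reverses base orientation (the base of $M$ is non-orientable), it must therefore reverse fiber orientation. Both lifts of $f$ consequently share the same overall orientation sign on $\tilde M$ — both are orientation-reversing, as $f$ is and $\tau$ is not — while they differ precisely in their fiber-orientation behavior. Exactly one of the two is fiber-orientation-reversing, and this is the one we designate as $\Phi(f)$.

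Bijectivity of $\Phi:Diff^{fp}_{-}(M)\to Cent^{for}_{-}(\tau)$ is then direct. Injectivity follows from $p\circ\Phi(f)=f\circ p$, which recovers $f$ locally from its lift via any evenly covered neighborhood. For surjectivity, any $\tilde g\in Cent^{for}_{-}(\tau)$ descends, via its commutation with $\tau$, to a well-defined map $f:M\to M$ satisfying $p\circ\tilde g=f\circ p$; this $f$ inherits the fiber-preserving property from $\tilde g$ and is orientation-reversing because $p$ is a locally orientation-preserving cover, so $f\in Diff^{fp}_{-}(M)$ and $\Phi(f)=\tilde g$ by construction. Alternatively, fixing any base point $f_{0}\in Diff^{fp}_{-}(M)$ (which exists precisely when the target set is non-empty) and its canonical lift $\tilde f_{0}\in Cent^{for}_{-}(\tau)$, the isomorphism from Theorem \ref{thm:nonorientable} transports to a bijection of cosets sending $f_{0}\circ g$ to $\tilde f_{0}\circ\Phi_{+}(g)$, giving a second route to the same conclusion.

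The main obstacle is the orientation bookkeeping in the second step: one must nail down that $\tau$ preserves the orientation of $\tilde M$ while reversing fiber orientation, and then verify that these two features indeed force exactly one of the two lifts of $f$ into the stratum $Cent^{for}_{-}(\tau)$ — a check that must hold uniformly across the two possible fiber-orientation behaviors of $f$ itself. Once the sign conventions are consistent, the rest of the argument is a straightforward transcription of the orientation-preserving case already handled in Theorem \ref{thm:nonorientable}.
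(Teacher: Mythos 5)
Your proposal is correct in substance, but it takes a genuinely different route from the paper. The paper does not re-run any lifting argument: it takes a special involution $\tilde{\psi}\in\tilde{\Psi}\subset Diff_{-}^{fp}(\tilde{M})$, which by the construction in the earlier paper already commutes with $\tau$, lets it descend to $\psi\in Diff_{-}^{fp}(M)$, and then translates cosets — for $f\in Diff_{-}^{fp}(M)$ the composite $f\circ\psi$ lies in $Diff_{+}^{fp}(M)$, Theorem \ref{thm:nonorientable} supplies $\tilde{g}\in Cent_{+}^{fop}(\tau)$ with $p\circ\tilde{g}=f\circ\psi\circ p$, and $\tilde{f}=\tilde{g}\circ\tilde{\psi}\in Cent_{-}^{for}(\tau)$ satisfies $p\circ\tilde{f}=f\circ p$. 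Your main argument instead redoes the covering-space machinery directly for orientation-reversing maps and selects the canonical lift by its fiber-orientation behavior; your orientation bookkeeping ($\tau$ orientation-preserving on $\tilde{M}$, hence fiber-orientation-reversing since it reverses base orientation; both lifts of $f$ orientation-reversing; exactly one of them fiber-orientation-reversing because the two lifts differ by $\tau$) is exactly right, and your closing alternative via a base point $f_{0}$ is essentially the paper's argument with an arbitrary $f_{0}$ in place of $\psi$. What your route buys: it is self-contained, does not depend on the existence of the constructed class $\Psi$, handles the case $Diff_{-}^{fp}(M)=\emptyset$ automatically, and makes the correspondence visibly compatible with the group isomorphism of Theorem \ref{thm:nonorientable}. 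What the paper's route buys: it avoids having to verify afresh that orientation-reversing maps lift (the quoted theorem only concerns $Diff_{+}$), reusing instead the $\tau$-equivariance of $\tilde{\Psi}$ from the earlier paper, and it ties the bijection to the factorization $f=\psi\circ g$ of Theorem \ref{thm:mainresult}. One imprecision you should repair: $p_{*}\pi_{1}(\tilde{M})$ is the kernel of a homomorphism to $\mathbb{Z}/2$, hence normal, but that does not make it characteristic; what you actually need (and what is true) is that it is preserved by $f_{*}$ for fiber-preserving $f$, because $f$ induces a diffeomorphism of the base orbifold and a loop lies in the orientation-preserving part of the base if and only if its image does.
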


\begin{proof}
    We begin by noting that we can take $\tilde{\psi} \in \tilde{\Psi}\subset Diff_{-}^{fp}(\tilde{M})$ and $\tilde{\psi}$ commutes with the covering translation $\tau$. To see this, we refer the reader to \cite{peet2019finitenon}. We can then define $\psi \in Diff_{-}^{fp}(M)$ with $\hat{\psi} \circ p = p \circ \psi$. 
   
   Then $\psi$ will also be an orientation-reversing involution that is the identity on the base space.

    Given $f\in Diff_{-}^{fp}(M)$, we have $f \circ \psi \in Diff_{+}^{fp}(M)$. So, by the theorem, there exists $\tilde{g}\in Cent_{+}^{fop}(\tau)$ such that: $p \circ \tilde{g} = f \circ \psi \circ p$.

    Now $\tilde{g} \circ \tilde{\psi} \in Cent_{-}^{for}(\tau)$ as $\tilde{g} \circ \tilde{\psi} \circ \tau = \tilde{g} \circ \tau \circ \tilde{\psi}= \tau \circ \tilde{g} \circ \tilde{\psi}$.

    Consequently, setting $\tilde{f}=\tilde{g} \circ \tilde{\psi} \in Cent_{-}^{for}(\tau)$ we have that:

    $p\circ \tilde{f} = (p \circ \tilde{g})\circ \tilde{\psi}=f \circ \psi \circ p \circ \tilde{\psi}=f \circ \psi \circ \psi \circ p = f \circ p$.

    This establishes the isomorphism.
\end{proof}

\section{Example}

We finish with a specific admissible manifold. We take the Euclidean manifold $M=(0,o_1|(2,1),(2,1),(2,1),(2,1),(1,-2))$.

We take an arbritary $\psi\in\Psi$ for this manifold. By Thereom 7.3, we know all orientation-reversing and fiber-preserving involutions will be of the form $\psi\circ g$ for orientation-preserving and fiber-preserving involutions $g$.

By the work of \cite{peet2019finite}, we know that any $g$ will be an extended product involution. In this case, we can decompose it into an orientation-preserving product involution $\hat{g}(u,x)=(g_1(u),g_2(x))$ on $S^1\times (S^1\setminus(D_1\cup D_2\cup D_3\cup D_4))$ and extend across the Dehn fillings.

We then can use \cite{dugger2019involutions} to characterize $\hat{g}$ in the two cases of:

\subsection{Fiber-orientation preserving}

We can assume that $g_1(u)=u$ (if not adjust the product structure) and by\cite{duggar} we have that $g_2$ is simply $spit_{0,0}$ (restricted to $S^1\setminus(D_1\cup D_2\cup D_3\cup D_4)$.

Note that once restricted, $spit_{0,0}$ either fixes none of the boundary components or fixes two of the boundary components. Hence up to conjugation, there are $2$ possibilities.

\subsection{Fiber-orientation reversing}

We can assume that $g_1(u)=u^{-1}$ and then by \cite{dugger2019involutions} $g_2$ is either $refl_{0,0}$ or $anti_{0,0}$ restricted to $S^1\setminus(D_1\cup D_2\cup D_3\cup D_4)$.

For both $refl$ and $anti_{0,0}$, again either none of the boundary components or two of them are fixed, once again two possibilities each, four total.

In all we see that up to conjugation there are only six possible fiber-preserving and orientation-reversing involutions

\bibliographystyle{unsrt}
\bibliography{references}

\end{document}